\pgfplotsset{compat=1.18}										
\tikzset{
	on layer/.code={\pgfonlayer{#1}\tikzset{every picture}\begingroup\aftergroup\endpgfonlayer\aftergroup\endgroup},
	also in front/.style 2 args={#1,postaction={on layer=front,draw,#1,#2}},
	also behind/.style 2 args={#1,postaction={on layer=back,draw,#1,#2}},
}
\newcommand{\Paul}[1]{{\color{green} #1}}
\DeclareRobustCommand{\norm}[2][]{			
	\begingroup
	\protect
	\if\relax\detokenize{#1}\relax			
	\ensuremath{\left\lVert#2\right\rVert}
	\else									
	\IfSubStr{bigBigbiggBigg}{#1}{
		\ensuremath{\#1\lVert#2\#1\rVert}
	}{
		\PackageWarning{Diese Größe ist mir nicht bekannt.}
	}
	\fi
	\endgroup
}
\DeclareMathOperator{\graph}{gph}
\DeclareMathOperator{\dom}{dom}
\DeclareMathOperator{\Sol}{Sol}
\DeclareMathOperator{\Solw}{Sol^w}
\DeclareMathOperator{\VOP}{VOP}
\DeclareMathOperator{\OP}{OP}
\DeclareMathOperator{\ri}{ri}
\DeclareMathOperator{\cl}{cl}
\DeclareMathOperator{\N}{\mathbb{N}}
\DeclareMathOperator{\R}{\mathbb{R}}
\DeclareMathOperator{\A}{\mathcal{A}}
\DeclareMathOperator{\B}{\mathcal{B}}
\DeclareMathOperator{\inte}{int}
\begin{document}
	\title{Unboundedness of the images of set-valued mappings having
		closed
		graphs: Application to vector optimization}
	\titlerunning{Unboundedness of the images of set-valued mappings}
	\author{V.T. Hieu \and E.A.S. K{\"o}bis \and M.A. K{\"o}bis \and P.H. Schm{\"o}lling}

	\institute{
 Vu Trung Hieu \\
Department of Mathematical Sciences, Norwegian University of Science and Technology \\ 
7491, Trondheim, Norway \\ 
Center for Advanced Intelligence Project, RIKEN \\
103-0027, Tokyo, Japan \\ 
\\
Elisabeth Anna Sophia K{\"o}bis \and Markus Arthur K{\"o}bis \and Paul Hugo Schm{\"o}lling \\
		Department of Mathematical Sciences, Norwegian University of Science and
		Technology 
		\\ 7491, Trondheim, Norway}


	\date{Received: date / Accepted: date}

	\maketitle

	\medskip

	\begin{abstract}
		In this paper, we propose criteria for unboundedness of the
		images of set-valued mappings having closed graphs in Euclidean spaces. We focus on mappings whose
		domains are non-closed or whose values are connected.
		These criteria allow us to see structural properties of solutions in vector
		optimization, where solution sets can
		be considered as the images of solution mappings associated to specific scalarization
		methods. In particular, we prove that if the domain of a certain solution mapping is non-closed, then the weak Pareto solution set
		is unbounded. Furthermore, for a quasi-convex problem, we demonstrate two criteria to ensure that if the weak Pareto solution set is disconnected then each connected component is unbounded.
	\end{abstract}

	\keywords{Set-valued mapping \and closed graph  \and  unboundedness \and disconnectedness \and vector
		optimization \and scalarization \and  quasi-convex function}

	\subclass{90C33 \and 90C31\and 54C60}

	\section{Introduction}

	Studying the structure of solution sets
	plays
	an important role in vector optimization theory; see, for example \cite[Chapter~6]{lucbook}
	and \cite[Chapter~3]{ehrgott2005multicriteria}.
	The most effective method for such studies is to consider the solution sets in question as the images of the solution mappings obtained by some scalarization techniques.

	It is well-known that the image of a connected set under a continuous single-valued mapping is connected. 
	Naccache in \cite{naccache1978connectedness},
	Warburton in \cite{warburton1983quasiconcave}, and  Hiriart-Urruty
	in \cite{hiriart1985images} extended the result for a set-valued mapping
	having connected values, provided that the mapping is
	upper (or lower) semi-continuous; see \Cref{thm:warb}. This criterion opened a
	door for studying connectedness of solution sets of not only vector optimization problems but also
	set optimization problems, vector variational
	inequalities and vector equilibrium problems. There are numerous works related to this
	topic, we can only list a few of them, for example
	\cite{luc1987connectedness,sun1996connectedness,gong2007connectedness,yen2011monotone,han2018existence,sharma2023connectedness,peng2019connectedness,han2022connectedness,anh2022connectedness}.
	Almost all therein obtained results require generalized
	convexity or monotonicity assumptions to ensure connected-valuedness, and require
	bound\-edness of the constraint sets or certain coercivity conditions to guarantee
	semi-continuity of solution mappings.

	The aim of this paper is to look for tools that can be applied to investigate
	unboundedness and disconnectedness of solutions to problems governed by unbounded data
	in optimization theory.

	In particular, the first part is dedicated to studying unbound\-edness of
	the images of set-valued mappings having closed graphs.  In
	\Cref{thm:main_sa},  we prove that if a
	mapping has non-closed domain then its image is unbounded. We next
	study the relationship between unboundedness and disconnectedness of the image of
	a mapping having connected values. In \Cref{thm:component},
	we introduce two criteria to ensure unboundedness of all connected components of
	the image.

	In the second part, we apply the above results to vector optimization based on two solution
	mappings associated to two scalarization methods.
	We obtain new properties related to
	unboundedness of the (weak) Pareto solution set of a problem in both
	cases: with a generalized
	convexity assumption and without imposing generalized convexity. For
	the second case, in \Cref{thm:VOP_nonconvex} we prove that if the domain of one of the two aforementioned
	solution mappings (denoted by $\Theta_1$ and $\Theta_2$; see the definitions in Subsection \ref{subs:scalar})
	is non-closed then the weak Pareto solution set is unbounded. The reverse implication holds for $\Theta_1$ if the problem is convex and this solution map has bounded values, see Theorem \ref{thm:VOPconvex}. In Theorem \ref{thm:VOPquasiconvex}, we have the same conclusion for $\Theta_2$ when the objective functions are quasi-convex and non-negative over the constraint set.

	For a problem governed by semi-strictly quasi-convex functions, to ensure
	disconnectedness of all components of
	the weak Pareto solution set,
	Hoa et al.  \cite[Theorem 3.2]{hoa2007unbounded} used the condition
	that the number of components of the set is finite.
	In \Cref{thm:semistrict}, we introduce two other criteria for this property
	of quasi-convex problems. As a
	result, when the objective functions are strictly
	quasi-convex, the Pareto solution set coincides with the weak
	Pareto solution set, and if this set is disconnected then all the
	components are unbounded.

	The remaining part of this paper consists of five sections. \Cref{sec_pre}
	gives some definitions, notations, and auxiliary results on set-valued mappings and
	connected sets. \Cref{sec:non_closed,sec:con} establish results concerning
	unboundedness of the images
	of set-valued mappings having non-closed domains and having connected values,
	respectively.
	Results on unboundedness of solutions to vector optimization
	problems are shown in the last two sections, namely \Cref{sec_dis,sec:convex}.

	\section{Preliminaries}\label{sec_pre}

	Let $X$ be a nonempty subset of $ \R^m$. 
	We consider a set-valued mapping $S$ from $X$
	to $\R^n$, $S:X\rightrightarrows\R^n$. The \textit{domain} of $S$ is the
	set of all
	elements $u$ in $X$ such that $S(u)$
	is non-empty:
	\begin{center}
		$\dom S\coloneqq\{u\in X:S(u)\neq \emptyset\}$.
	\end{center}
	The union of the images (or values) $S(u)$,
	when
	$u$ ranges over $U\subseteq X$, is the image of $U$ under the mapping $S$:
	$$S(U)\coloneqq\cup_{u\in U} S(u).$$
	The \textit{image} of $S$ is the set $S(X)=S(\dom S)$.
	If its \textit{graph}
	\begin{center}
		$\graph S\coloneqq\{(u,v)\in X\times \R^n:v\in S(u)\}$
	\end{center}
	is a closed subset of $X\times \R^n$, then one says that $S$ has \textit{closed
		graph}. If $S$ has closed graph, then $S(u)$ is a closed set for every $u\in X$.
	The \textit{inverse mapping} of $S$, denoted by $S^{-1}: \R^n
	\rightrightarrows  X$, is defined
	by
	$$S^{-1}(v)\coloneqq\{u\in X: v\in S(u)\}, \ v\in \R^n.$$
	If $\A$ is a subset in $\R^n$, then the \textit{inverse image} of $\A$ by $S$ is
	the following set
	\begin{center}
		$S^{-1}(\A)=\{u\in X:S(u)\cap \A\neq \emptyset\}.$
	\end{center}

	The mapping $S$ is
	\textit{locally bounded} at $\bar u$ if there exists an open neighborhood $U$
	of $\bar u$ such that $S(U\cap X)$ is bounded.
	If $S$ is locally bounded at every $u\in X$, then
	$S$ is
	said to be locally bounded on $X$.
	The  mapping $S$ is  \textit{upper semi-continuous} at $\bar u\in X$
	if for
	any open set $V\subseteq \R^n$ such that $S(\bar u)\subseteq V$ there exists an open
	neighborhood $U$ of $\bar u$ such that $S(u)\subseteq V$ for all $u\in U$.
	If $S$ is upper semi-continuous at every $u\in X$, then $S$ is
	said to be upper semi-continuous on $X$.

	From \cite[Theorem 1.4.1]{AuslenderTeboulle2003AsymptoticConesFunctionsOptimizationVariationalInequalitiesbook}, one has the following lemma which
	will be used in the proof of \Cref{thm:component}.

	\begin{lemma}\label{lm:local}
		Assume that $S$ has closed graph. If $S$ is locally bounded at $\bar u$ for
		some
		$\bar u$ in  $\dom S$, then
		$S$ is upper semi-continuous at $\bar u$.
	\end{lemma}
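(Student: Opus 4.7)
The plan is to argue by contradiction using a standard sequential extraction, with local boundedness providing the compactness needed to pass to a convergent subsequence, and the closed graph property providing the limit point in $\graph S$.

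First, I would fix an open set $V \subseteq \R^n$ with $S(\bar u) \subseteq V$ and suppose for contradiction that $S$ fails to be upper semi-continuous at $\bar u$. Negating the definition, for every neighborhood of $\bar u$ there is a point $u \in X$ in that neighborhood with $S(u) \not\subseteq V$. Taking balls of radius $1/k$ around $\bar u$, I would extract sequences $u_k \in X$ with $u_k \to \bar u$ and $v_k \in S(u_k)$ with $v_k \notin V$.

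Next I would invoke local boundedness: there is an open neighborhood $W$ of $\bar u$ such that $S(W \cap X)$ is bounded. Since $u_k \to \bar u$, for all sufficiently large $k$ we have $u_k \in W \cap X$, hence $v_k \in S(W \cap X)$. Boundedness in $\R^n$ then yields a subsequence $v_{k_j} \to \bar v$ for some $\bar v \in \R^n$.

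Finally I would combine both hypotheses to reach a contradiction. The pairs $(u_{k_j}, v_{k_j})$ belong to $\graph S$ and converge to $(\bar u, \bar v)$; since $\graph S$ is closed in $X \times \R^n$, this forces $(\bar u, \bar v) \in \graph S$, i.e.\ $\bar v \in S(\bar u) \subseteq V$. On the other hand, each $v_{k_j}$ lies in the closed set $\R^n \setminus V$, so its limit $\bar v$ lies there too, contradicting $\bar v \in V$. This contradiction closes the argument. There is no real obstacle here; the only subtlety is being careful that the extracted $u_k$ actually lie in $X$ (which is automatic from how we pick them) so that the closed graph hypothesis, stated relative to $X \times \R^n$, applies.
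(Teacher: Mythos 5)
Your argument is correct: the sequential extraction (local boundedness gives a convergent subsequence $v_{k_j}\to\bar v$, the closed graph gives $\bar v\in S(\bar u)\subseteq V$, and closedness of $\R^n\setminus V$ gives $\bar v\notin V$) is exactly the standard proof of this fact. Note, though, that the paper itself does not prove the lemma at all --- it simply imports it from Theorem~1.4.1 of Auslender--Teboulle --- so what you have written is a self-contained proof of the cited result rather than a variant of an argument in the paper; the approach matches the classical one in that reference. One small point of phrasing: you should negate the definition in the right order, i.e.\ suppose that for the \emph{fixed} open set $V\supseteq S(\bar u)$ no neighborhood $U$ satisfies $S(U\cap X)\subseteq V$ (rather than ``suppose $S$ fails to be upper semi-continuous'' after fixing $V$); since $V$ was arbitrary, the contradiction for this $V$ yields upper semi-continuity. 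Also, the subtlety you flag at the end is really about the limit point: closedness of $\graph S$ relative to $X\times\R^n$ applies because $\bar u\in\dom S\subseteq X$, so $(\bar u,\bar v)$ lies in $X\times\R^n$; the fact that the $u_k$ lie in $X$ is needed only so that $v_k\in S(u_k)$ makes sense.
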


	Let $Y$ be a nonempty subset of $\R^n$.
	One says that $Y$ is \textit{connected} if there do not exist two nonempty disjoint subsets $A,B$ of $Y$ with $Y\subseteq A\cup B$ and two open subsets $U,V$
	in $\R^n$ such that $A\subseteq U$, $B\subseteq V$ and $(U\cap Y)\cap (V\cap Y)=\emptyset$.
	Otherwise $Y$ is called \textit{disconnected}.
	A nonempty subset $A\subseteq Y$ is said to be a \textit{connected component} of
	$Y$ if $A$ is connected and it is not a proper subset of any connected subset of
	$Y$. Recall that when $Y$ is connected, if the set $A\subseteq Y$ is closed and open
	in $Y$, then $A=Y$. Moreover, if $Y$ is connected then the union $Y\cup \{y\}$, where
	$y$ belongs to the closure $\cl(Y)$, is also connected.

	One says that the set-valued mapping $S$ has \textit{connected} (\textit{convex, bounded}) \textit{values} if
	$S(u)$ is connected (convex, bounded) for every $u$ in the domain of $S$.

	For the convenience of the readers in comparing our developments with the before-mentioned
	result by Naccache, Warburton, and Hiriart-Urruty, we recall the following theorem; its proofs can be
	found in \cite[Lemma
	2.1]{naccache1978connectedness},
	\cite[Theorem
	3.1]{warburton1983quasiconcave}, or \cite[Theorem
	3.1]{hiriart1985images}.

	\begin{theorem}\label{thm:warb}
		Assume that $\dom S$ is connected and $S$ has 
		connected values. If $S$ is upper
		semi-continuous on $X$, then the image $S(X)$ is connected.
	\end{theorem}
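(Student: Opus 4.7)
The plan is to prove this by contradiction, lifting a separation of the image $S(X)$ back to a separation of $\dom S$ via the connected-valuedness of $S$ and the upper semi-continuity. This is the standard argument behind the Naccache--Warburton--Hiriart-Urruty theorem.

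First, assume for contradiction that $S(X)$ is disconnected. Then there are open sets $U,V\subseteq\R^n$ with $S(X)\subseteq U\cup V$, both $S(X)\cap U$ and $S(X)\cap V$ nonempty, and $(S(X)\cap U)\cap (S(X)\cap V)=\emptyset$. The key dichotomy is that for every $u\in\dom S$, the value $S(u)$ is connected and contained in the disjoint (within $S(X)$) union $U\cup V$, so $S(u)$ must lie entirely in $U$ or entirely in $V$. This lets me define
\[
A\coloneqq\{u\in\dom S:S(u)\subseteq U\},\qquad B\coloneqq\{u\in\dom S:S(u)\subseteq V\},
\]
and observe that $A$ and $B$ are disjoint, nonempty, and cover $\dom S$.

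Second, I would promote the separation at the level of $\R^n$ to a separation at the level of $\dom S$ using upper semi-continuity. For any $u\in A$, the open set $U$ contains $S(u)$, so by upper semi-continuity at $u$ there is an open neighborhood $W$ of $u$ with $S(u')\subseteq U$ for every $u'\in W\cap X$; hence $W\cap\dom S\subseteq A$, showing $A$ is relatively open in $\dom S$. The same argument applied with $V$ shows that $B$ is relatively open in $\dom S$. Thus $\{A,B\}$ disconnects $\dom S$, contradicting the hypothesis that $\dom S$ is connected.

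The only technical issue is a matter of bookkeeping: ensuring that the chosen $U$ and $V$ do indeed give $A\neq\emptyset$ and $B\neq\emptyset$. This follows because $S(X)\cap U$ and $S(X)\cap V$ are nonempty, so there exist $u_1,u_2\in\dom S$ and points $v_1\in S(u_1)\cap U$, $v_2\in S(u_2)\cap V$; by the dichotomy above we must have $u_1\in A$ and $u_2\in B$. I do not anticipate a genuine obstacle here — the proof is essentially a two-line topological argument once the dichotomy on values is in place — so the main care is only to state the definition of disconnectedness (as given in the Preliminaries) accurately enough that the open sets $U,V$ can be used directly with the definition of upper semi-continuity.
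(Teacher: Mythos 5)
Your proof is correct: the dichotomy $S(u)\subseteq U$ or $S(u)\subseteq V$ forced by connected-valuedness, the relative openness of $A$ and $B$ obtained from upper semi-continuity, and the resulting separation of $\dom S$ constitute exactly the standard argument. The paper itself states \Cref{thm:warb} without proof, citing Naccache, Warburton, and Hiriart-Urruty, and your argument is essentially the one found in those references, so there is nothing to object to beyond the minor bookkeeping of turning the relatively open sets $A,B$ into open sets of $\R^m$ as required by the paper's definition of connectedness, which your neighborhoods $W$ already provide.
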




	\section{Set-valued mappings having non-closed domains}\label{sec:non_closed}

	Let $A$ be a \textit{nonempty} subset of $\R^m$.  We denote by $\ri A$ its \textit{relative interior}.
	Throughout the present section, it is assumed that the following set-valued mapping has \textit{closed graph}:
	$$\Phi:A\rightrightarrows\R^n, \ a \mapsto \Phi(a).$$

	The proofs of the main results in the present and next sections are based on the following
	lemma.

	\begin{lemma}\label{A_closed}
		If $\A$ is a bounded connected component of $\Phi(A)$, then
		$\Phi^{-1}(\A)$ is closed. Furthermore, if $\Phi(A)$ is bounded, then
		$\dom\Phi$ is closed.
	\end{lemma}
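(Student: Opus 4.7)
The plan is to argue by sequential compactness, with the standard point-set fact that every connected component of a topological space is closed in that space serving as the one non-routine input. For the first assertion, take a convergent sequence $(a_k) \subseteq \Phi^{-1}(\mathcal{A})$ with limit $\bar{a}$, and for each $k$ select a witness $v_k \in \Phi(a_k) \cap \mathcal{A}$. Since $\mathcal{A}$ is bounded, some subsequence $(v_{k_j})$ converges to a point $\bar{v}$. The pair $(a_{k_j}, v_{k_j})$ lies in $\graph \Phi$ and tends to $(\bar{a}, \bar{v})$; closedness of the graph then yields $\bar{a} \in A$ and $\bar{v} \in \Phi(\bar{a}) \subseteq \Phi(A)$.

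The crux is then to upgrade $\bar{v} \in \cl(\mathcal{A})$ to $\bar{v} \in \mathcal{A}$. Here I invoke the fact that any connected component is closed in the subspace topology of the ambient set, so $\cl(\mathcal{A}) \cap \Phi(A) = \mathcal{A}$. Combining this with $\bar{v} \in \cl(\mathcal{A}) \cap \Phi(A)$ gives $\bar{v} \in \mathcal{A}$, whence $\Phi(\bar{a}) \cap \mathcal{A} \neq \emptyset$ and $\bar{a} \in \Phi^{-1}(\mathcal{A})$, as required.

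For the second assertion, essentially the same argument works without any recourse to components. Taking any sequence $(a_k) \subseteq \dom \Phi$ with $a_k \to \bar{a}$ and picking arbitrary $v_k \in \Phi(a_k)$, boundedness of $\Phi(A)$ furnishes a convergent subsequence $v_{k_j} \to \bar{v}$, and the closed-graph hypothesis immediately gives $\bar{v} \in \Phi(\bar{a})$, so $\bar{a} \in \dom \Phi$. Alternatively, one can note that $\Phi(A)$ being bounded is a connected-component-free special case of the first part, but the direct proof is shorter.

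I do not anticipate any serious obstacle. The only subtlety worth emphasising is the order of the steps in the first part: one must first apply the closed-graph hypothesis to pin down $\bar{v}$ inside $\Phi(A)$, and only then invoke the identity $\cl(\mathcal{A}) \cap \Phi(A) = \mathcal{A}$, since a priori $\bar{v}$ is known only to live in $\cl(\mathcal{A}) \subseteq \R^n$ rather than in $\Phi(A)$ itself.
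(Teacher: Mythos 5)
Your proof is correct and follows essentially the same route as the paper: a sequential argument using boundedness to extract a convergent subsequence of witnesses, the closed graph to place the limit in $\Phi(\bar a)$, and then the maximality/closedness of the connected component to conclude the limit lies in $\mathcal{A}$. The only cosmetic difference is that you package the last step as ``components are closed in the subspace topology'' (so $\cl(\mathcal{A})\cap\Phi(A)=\mathcal{A}$), whereas the paper uses the equivalent fact that $\mathcal{A}\cup\{\bar x\}$ is connected when $\bar x\in\cl(\mathcal{A})$ together with maximality of $\mathcal{A}$.
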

	\begin{proof}
		Let $\A$ be a bounded connected component of $\Phi(A)$. To prove that $\Phi^{-1}(\A)$ is closed, we suppose that a sequence $\{a^k\}\subseteq \Phi^{-1}(\A)$
		converges to $\bar a\in\R^m$ and need to prove $\bar a\in \Phi^{-1}(\A)$. Let $\{x^k\}\subseteq \A$ be a sequence such that $x^k\in \Phi(a^k)$ for every $k\in\N$. By the boundedness of $\A$,  we
		can assume that  $x^{k} \to \bar x$. Since the
		graph of $\Phi$ is closed, $(\bar a,\bar x)$
		belongs to $\graph \Phi$, thus $\bar x\in \Phi(\bar a)$. As
		$\A$ is connected, the union $\A\cup\{\bar x\}$ is
		connected. This means that $\bar x\in \A$. We thus get $\bar x\in  \A\cap
		\Phi(\bar a)$, and then $\bar a\in \Phi^{-1}(\A)$ by definition.
		Therefore, the inverse image $\Phi^{-1}(\A)$ is closed.

		We now prove the second conclusion. Suppose that $\Phi(A)$ is bounded. To prove closedness of $\dom\Phi$, we suppose that a sequence $\{a^k\}\subseteq \dom\Phi$ converges to $\bar a\in\R^m$ and need to prove $\bar a\in \dom\Phi$. Let $\{x^k\}\subseteq \Phi(A)$ be a sequence such that $x^k\in \Phi(a^k)$ for every $k\in\N$. By the boundedness of $\Phi(A)$,  we
		can assume that  $x^{k} \to \bar x$. Since the
		graph of $\Phi$ is closed, $(\bar a,\bar x)$
		belongs to $\graph \Phi$, thus $\bar x\in \Phi(\bar a)$. This implies that $\bar a$ belongs to $\dom\Phi$.
		\qed
	\end{proof}

	\begin{theorem}\label{thm:main_sa} Assume that $\dom\Phi$ is non-closed. Then,
		$\Phi(A)$ is unbounded. Moreover, if additionally $(\ri A)\cap\dom\Phi$ is dense in $\dom\Phi$, then $\Phi(\ri A)$ is unbounded.
	\end{theorem}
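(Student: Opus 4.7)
The first assertion follows directly by contrapositive from the second statement of Lemma \ref{A_closed}: if $\Phi(A)$ were bounded, that lemma would force $\dom\Phi$ to be closed, contradicting the hypothesis. Hence $\Phi(A)$ must be unbounded.

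For the refined statement my plan is to reduce once more to the first assertion, but applied to the restriction of $\Phi$ to $\ri A$. I would introduce $\tilde\Phi : \ri A \rightrightarrows \R^n$ defined by $\tilde\Phi(a) := \Phi(a)$ for $a \in \ri A$, so that $\dom\tilde\Phi = (\ri A) \cap \dom\Phi$ and $\graph\tilde\Phi = \graph\Phi \cap ((\ri A) \times \R^n)$. Since $\ri A \subseteq A$ and $\graph\Phi$ is closed, the graph of $\tilde\Phi$ is obtained as the intersection of a closed set with the ambient product $(\ri A)\times \R^n$ and is therefore closed in $(\ri A)\times \R^n$, so $\tilde\Phi$ falls under the standing assumption of the section. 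If I can establish that $\dom\tilde\Phi$ is not closed in $\R^m$, then applying the first assertion to $\tilde\Phi$ yields $\Phi(\ri A) = \tilde\Phi(\ri A)$ unbounded, which is exactly the desired conclusion.

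The remaining step, which I expect to be the main point, is to verify this non-closedness of $\dom\tilde\Phi$, and it is precisely here that the density hypothesis is exploited. Since $\dom\Phi$ is non-closed there exists $\bar a \in \cl(\dom\Phi)\setminus\dom\Phi$; since $(\ri A) \cap \dom\Phi$ is dense in $\dom\Phi$, its closure contains $\cl(\dom\Phi)$, so $\bar a$ is the limit of a sequence drawn from $\dom\tilde\Phi$. However $\bar a \notin \dom\Phi \supseteq \dom\tilde\Phi$, so this sequence witnesses that $\dom\tilde\Phi$ is not closed, and the argument closes by invoking the first part of the theorem applied to $\tilde\Phi$. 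The only real subtlety I anticipate is bookkeeping: one has to be careful that the ``closed graph'' property transfers cleanly from $\Phi$ on $A$ to $\tilde\Phi$ on $\ri A$, and that closures of the various domains are taken in the same ambient space $\R^m$ throughout; beyond that, each step is a short unfolding of the definitions.
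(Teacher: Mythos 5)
Your first assertion is handled exactly as in the paper, and your density argument showing that $\dom\tilde\Phi=(\ri A)\cap\dom\Phi$ is non-closed is correct. The gap is precisely at the point you dismiss as bookkeeping: the claim that $\tilde\Phi$ ``falls under the standing assumption of the section'' because $\graph\tilde\Phi=\graph\Phi\cap((\ri A)\times\R^n)$ is closed \emph{relative to} $(\ri A)\times\R^n$. The first assertion of \Cref{thm:main_sa} (equivalently, the second conclusion of \Cref{A_closed}) genuinely needs the graph to be closed in $\R^m\times\R^n$: in the proof of \Cref{A_closed} the limit $\bar a$ of a sequence from the domain is an arbitrary point of $\R^m$, and one must be able to conclude $(\bar a,\bar x)\in\graph\Phi$. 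With only relative closedness the statement you invoke is false: take $A=(0,1)$ and $\Phi(a)=\{0\}$ for all $a$; the graph $(0,1)\times\{0\}$ is closed in $A\times\R$, the image $\{0\}$ is bounded, yet $\dom\Phi=(0,1)$ is non-closed. And since $\ri A$ is in general not closed, the restricted graph $\graph\Phi\cap((\ri A)\times\R^n)$ is in general not closed in $\R^{m+n}$, so you cannot legitimately apply the first assertion to $\tilde\Phi$. This is not a cosmetic issue; it is the entire difficulty of the second claim.

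The paper circumvents it by manufacturing a mapping with an honestly closed graph: it sets $\Sigma=\graph\Phi\cap(\ri A\times\R^n)$, uses closedness of $\graph\Phi$ to get $\overline\Sigma\subseteq\graph\Phi$, and defines $\Phi'(a)=\pi(\overline\Sigma)\cap\Phi(a)$ on all of $A$, whose graph equals $\overline\Sigma$ and whose image $\pi(\overline\Sigma)$ is bounded whenever $\Phi(\ri A)$ is; \Cref{A_closed} then yields closedness of $\dom\Phi'$, and the sandwich $(\ri A)\cap\dom\Phi\subseteq\dom\Phi'\subseteq\dom\Phi$ together with your density hypothesis forces $\dom\Phi$ to be closed, a contradiction. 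Alternatively, your reduction can be repaired without introducing $\tilde\Phi$ at all: assuming $\Phi(\ri A)$ bounded, pick $\bar a\in\cl(\dom\Phi)\setminus\dom\Phi$, use density to choose $a^k\in(\ri A)\cap\dom\Phi$ with $a^k\to\bar a$ and $x^k\in\Phi(a^k)\subseteq\Phi(\ri A)$, extract $x^k\to\bar x$, and apply closedness of the \emph{full} graph $\graph\Phi$ to get $\bar x\in\Phi(\bar a)$, hence $\bar a\in\dom\Phi$, a contradiction. Either way, the limit argument must be carried by the closed graph of $\Phi$ itself, not by the restriction to $\ri A$.
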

	\begin{proof}
		By the non-closedness of  $\dom\Phi$ and the second conclusion in \Cref{A_closed}, we conclude straightforwardly that $\Phi(A)$ is unbounded.  We
		only need to prove the second claim.

Let $(\ri A)\cap\dom\Phi$ be dense in $\dom\Phi$. Suppose on
		the contrary that $\Phi(\ri A)$ is bounded. Let
		$\overline \Sigma$ be the closure of the following set:
		\begin{center}
			$\Sigma\coloneqq\left(\graph\Phi\right) \cap\left(\ri A\times \R^n\right) =\{(a,x)\in \ri
			A\times
			\R^n:x\in\Phi(a)\}$.
		\end{center}
		Clearly, one has $\Sigma \subseteq \graph\Phi$.
		Since $\graph\Phi$ is closed, the closure of $\Sigma$ is also a subset of $\graph\Phi$, i.e. $\overline\Sigma\subseteq\graph\Phi$.
		Let $\pi:\R^{m+n}\to \R^{n}$ be the
		projection
		defined by
		$\pi(t_1,\dots,t_m,t_{m+1},\dots,t_{m+n})=(t_{m+1},\dots,t_{m+n})$.
It is not difficult to see that
		$$\Phi(\ri A) \subseteq \pi(\overline\Sigma) \subseteq \overline{\pi(\Sigma)}.$$
Moreover, by the boundedness of $\Phi(\ri A)$ and the definition of $\Sigma$, we see that $\pi(\Sigma)$ is bounded, hence $\overline{\pi(\Sigma)}$ is as well. Thus, we claim that the image of $\overline\Sigma$ under $\pi$ is bounded.

We now consider a new mapping $\Phi'$ defined by
		\[\Phi':A\rightrightarrows \R^n, \ \Phi'(a)\coloneqq\pi(\overline\Sigma)\cap\Phi(a).\]
		One can check that the graph of $\Phi'$ coincides with
		$\overline\Sigma$. Hence, $\Phi'$ also has closed graph. By boundedness of $\pi(\overline\Sigma)$, $\Phi'(A)=\pi(\overline\Sigma)$ is also bounded. Applying the second conclusion in
		\Cref{A_closed} to $\Phi'$, we get that $\dom\Phi'$ is closed.
		Furthermore, it is not difficult to see the following
		inclusions:
		$$(\ri A) \cap \dom\Phi\ \subseteq \ \dom\Phi' \ \subseteq  \ \dom\Phi.$$
By the density of $(\ri A) \cap \dom\Phi$ in $\dom\Phi$, the above inclusions imply that $\dom\Phi'$ is also dense in $\dom\Phi$.
		It follows that $\dom\Phi$ is closed. This is a contradiction.   \qed
	\end{proof}

	\begin{remark}\label{rmk:A}
		Let $\A$ be a connected
		component of $\Phi(A)$. If $\Phi^{-1}(\A)$ is
		non-closed, thanks to \Cref{A_closed}, $\A$ is
		unbounded.
	\end{remark}

	\begin{figure}[h!]
		\centering
		\begin{tikzpicture}[>=latex]
			\begin{axis}[
				axis x line=bottom,
				axis y line=center,
				xtick={-5,-4,...,5},
				ytick distance=10,
				xlabel={$x_1$},
				ylabel={$x_2$},
				every axis x label/.style={at={(current axis.right of origin)},anchor=west},
				ylabel style={above left},
				xmin=-5.5,
				xmax=5.5,
				ymin=0,
				ymax=53
				]
				\addplot [mark=none,thick,domain=-5.5:-0.1,samples=200] {1/x^2};
				\addplot [mark=none,thick,domain=0.1:5.5,samples=200] {1/x^2};
			\end{axis}
		\end{tikzpicture}
		\caption{The image of $\Theta_1$ has two unbounded components.}
	\end{figure}

	\begin{example}\label{ex:dis1}
		From \Cref{ex:nonconvex} we consider the set-valued mapping that is the solution
		mapping $\Theta_1:\Delta \rightrightarrows \R^2$ of the vector optimization problem given there:
		\begin{equation}\label{eq:ex1}
			\Theta_1(\xi_1)=\left\{\begin{array}{cl}
				\left\lbrace  \Big(\pm\sqrt{\frac{1-\xi_1}{\xi_1}},\frac{\xi_1}{1-\xi_1}\Big)\right\rbrace   & \
				\ \hbox{ if } \ \xi_1\in (0,1), \smallskip \\
				\emptyset & \ \ \hbox{ if } \ \xi_1\in\{0,1\} . \\
			\end{array}\right.
		\end{equation}
		We can see that the
		graph of $\Theta_1$ is
		closed and that the domain of $\Theta_1$ is the open interval $(0,1)$ which is
		non-closed.
		According to \Cref{thm:main_sa}, the image of $\Theta_1$
		is unbounded. In particular,
		we have
		\begin{equation}\label{eq:ex1_sol}
			\Theta_1([0,1]) = \big\lbrace (x_1,x_2)\in \R^2:
			x_1^2x_2=1\big\rbrace=\A \cup \B,
		\end{equation}
		where $$\A\coloneqq\Big\lbrace (x_1,x_2)\in \R^2: x_2>0,
		x_1\sqrt{x_2}=-1\Big\rbrace$$
		and $$\B\coloneqq\Big\lbrace (x_1,x_2)\in \R^2: x_2>0,
		x_1\sqrt{x_2}=1\Big\rbrace.$$
		Clearly, $\Theta_1^{-1}(\A)=\Theta_1^{-1}(\B)=\ri \Delta$ is non-closed. Hence,
		$\Theta_1([0,1])$ has two unbounded connected components $\A$ and
		$\B$, illustrated in Figure 1, whose inverse images are non-closed.
	\end{example}

	\section{Set-valued mappings having connected values}\label{sec:con}

	Throughout the section, we consider another set-valued mapping
	$$\Psi:A\rightrightarrows\R^n, \ a \mapsto \Psi(a),$$
	where $A\subseteq \R^m$ is nonempty, and
	$\Psi$ has \textit{closed graph} and \textit{connected
		values}.

	In \Cref{thm:warb}, under upper semi-continuity condition, the image of a
	connected set through $\Psi$ is connected. Consequently, if the image is disconnected
	then either $\Psi$ is not upper semi-continuous or the domain of $\Psi$ is
	disconnected.
	In this section, we focus on the relationship between disconnectedness of $\Psi(A)$
	and its unboundedness.

	\begin{lemma}\label{AB_disjoint}
		If $\A$ and $\B$ are different connected
		components of $\Psi(A)$, then
		\begin{equation}\label{empty}
			\Psi^{-1}(\A)\cap \Psi^{-1}(\B)=\emptyset.
		\end{equation}
	\end{lemma}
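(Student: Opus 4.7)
The plan is to argue by contradiction. Suppose, contrary to the conclusion, that there exists $a\in \Psi^{-1}(\A)\cap \Psi^{-1}(\B)$. By definition of the inverse image, this gives two nonempty intersections $\Psi(a)\cap\A\neq\emptyset$ and $\Psi(a)\cap\B\neq\emptyset$, and in particular $a\in\dom\Psi$, so $\Psi(a)$ itself is a nonempty connected subset of $\R^n$ by the standing hypothesis that $\Psi$ has connected values.

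Next I would assemble the three sets $\A$, $\Psi(a)$, $\B$ into a single connected set. Since $\A$ and $\Psi(a)$ are both connected and meet, $\A\cup\Psi(a)$ is connected; since this set and $\B$ are both connected and meet (they both contain points of $\Psi(a)\cap\B$), the union
\[
\A\cup\Psi(a)\cup\B
\]
is a connected subset of $\Psi(A)$. Because $\A$ is a connected component of $\Psi(A)$, i.e.\ a \emph{maximal} connected subset, and since $\A\subseteq \A\cup\Psi(a)\cup\B\subseteq\Psi(A)$, maximality forces
\[
\A=\A\cup\Psi(a)\cup\B,
\]
and in particular $\B\subseteq\A$. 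Running the same argument with the roles of $\A$ and $\B$ interchanged yields $\A\subseteq\B$, hence $\A=\B$, contradicting the assumption that $\A$ and $\B$ are different components. This contradiction establishes \eqref{empty}.

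There is no substantive obstacle here: the argument is purely set-theoretic and relies only on the connected-valuedness of $\Psi$ together with the maximality built into the definition of a connected component. The closed-graph hypothesis on $\Psi$ is not needed for this lemma and would only enter later, in the proofs where \Cref{A_closed} is invoked.
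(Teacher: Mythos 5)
Your proof is correct and follows essentially the same route as the paper: argue by contradiction from a common point $a$, use connectedness of $\Psi(a)$ together with maximality of connected components to force $\A=\B$. The paper phrases this slightly more directly (a connected set meeting a component lies inside it, so $\Psi(a)\subseteq\A\cap\B$), but this is the same idea, and your observation that the closed-graph hypothesis is not used here matches the paper's proof as well.
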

	\begin{proof}
		Suppose that $\A$ and $\B$ are different connected components of $\Psi(A)$. Assume on
		the contrary that there exists
		$a\in \Psi^{-1}(\A)\cap \Psi^{-1}(\B).$
		It follows that $\Psi(a)\cap \A\neq \emptyset$ and $\Psi(a)\cap \B\neq
		\emptyset$.
		Note that $\Psi(a)$ is connected. Since $\A,\B$ are connected components of $\Psi(A)$, one has $\Psi(a)\subseteq \A$ and $\Psi(a)\subseteq \B$. This
		yields
		$$\Psi(a)\subseteq \A\cap\B\neq\emptyset,$$
		which contradicts the assumption that $\A$ and $\B$ are two different connected components. Thus
		\eqref{empty}
		is proved. \qed
	\end{proof}

	The next two theorems provide criteria for unboundedness of all connected
	components of the image of a mapping having connected values.
	The technique used to prove these theorems is generalized from
	\cite{yen2011monotone,hieu2021disconnectedness}. The idea of these theorems is the
	following observation: we are interested in unboundedness, hence we ignore the
	unbounded values of $\Psi$ and focus on the points in $\dom \Psi$ having bounded
	values.  Here, we denote by $\mathcal{O}$ the set
	\begin{center}
		$\mathcal{O}=\{a \in \dom \Psi: \Psi(a) \text{ is bounded}\}$.
	\end{center}

	\begin{theorem}\label{thm:component}
		Assume that $\dom \Psi$ is connected and that $\Psi$
		is locally bounded or upper semi-continuous on $\mathcal{O}$.
		Then, if $\Psi(A)$ is disconnected, all its connected components are unbounded.
	\end{theorem}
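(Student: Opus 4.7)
The plan is to argue by contradiction and extract a disconnection of $\dom\Psi$, contradicting the connectedness hypothesis. Suppose some connected component $\A$ of $\Psi(A)$ is bounded; the target is to show that $D \coloneqq \Psi^{-1}(\A)$ is a nonempty, proper, clopen subset of $\dom\Psi$.

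Closedness of $D$ in $\dom\Psi$ is immediate from the first conclusion of \Cref{A_closed}. Nonemptiness follows since $\A \neq \emptyset$; properness follows because disconnectedness of $\Psi(A)$ provides another component $\B$ whose preimage $\Psi^{-1}(\B)$ is nonempty and, by \Cref{AB_disjoint}, disjoint from $D$. The main work therefore lies in proving that $D$ is \emph{open} in $\dom\Psi$. Fix $\bar a \in D$; since $\Psi(\bar a)$ is connected and meets $\A$, maximality of $\A$ as a connected component of $\Psi(A)$ forces $\Psi(\bar a) \subseteq \A$, so $\Psi(\bar a)$ is bounded and $\bar a \in \mathcal{O}$. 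The hypothesis together with \Cref{lm:local} ensures $\Psi$ is upper semi-continuous at $\bar a$. The idea is then to find an open set $V \supseteq \Psi(\bar a)$ enjoying the separation $V \cap \Psi(A) \subseteq \A$: upper semi-continuity would furnish a neighborhood $U$ of $\bar a$ with $\Psi(U \cap A) \subseteq V$, hence $\Psi(U \cap \dom\Psi) \subseteq V \cap \Psi(A) \subseteq \A$, so $U \cap \dom\Psi \subseteq D$ as required.

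Producing this separating $V$ is the step I expect to be the main obstacle, and the plan is to attack it by a nested contradiction. If no such $V$ existed, every neighborhood of the compact set $\Psi(\bar a)$ would meet $\Psi(A) \setminus \A$; choosing $y^k \in \Psi(A) \setminus \A$ with $d(y^k, \Psi(\bar a)) \to 0$ and using boundedness of $\Psi(\bar a)$, one extracts $y^k \to \bar y \in \Psi(\bar a) \subseteq \A$. Writing $y^k \in \Psi(c^k)$, each $\Psi(c^k)$ lies in some component $\B_k \neq \A$ by connected-valuedness. In the spirit of the proof of \Cref{A_closed}, using closed graph together with upper semi-continuity at the points of $\mathcal{O}$, I would then argue that a subsequence can be chosen along which $\B_k$ is constantly equal to some fixed component $\B$; then $\B \cup \{\bar y\}$ is a connected subset of $\Psi(A)$ properly containing $\B$ (since $\bar y \in \A \neq \B$), contradicting its maximality as a component. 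With this separating $V$ secured and hence $D$ shown to be clopen, nonempty, and proper, the connectedness of $\dom\Psi$ is violated, completing the proof.
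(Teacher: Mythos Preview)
Your global strategy---show that $D=\Psi^{-1}(\A)$ is a nonempty, proper, clopen subset of the connected set $\dom\Psi$---is exactly the paper's. Closedness via \Cref{A_closed}, properness via \Cref{AB_disjoint}, and the reduction $\bar a\in D\Rightarrow\Psi(\bar a)\subseteq\A\Rightarrow\bar a\in\mathcal O$ all match. The divergence is in the openness step: the paper does not look for an open $V$ with $V\cap\Psi(A)\subseteq\A$; it merely chooses an open $V\supseteq\A$ with $V\cap\B=\emptyset$ for \emph{one} other component $\B$, applies upper semi-continuity at $\bar a$ to obtain a neighbourhood $U$ with $\Psi(a')\subseteq V$, and from this declares $\Psi^{-1}(\A)$ open.

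The substantive gap in your proposal is the nested contradiction producing the separating $V$. The assertion that ``a subsequence can be chosen along which $\B_k$ is constantly equal to some fixed component $\B$'' is not supported by the hypotheses: the points $y^k\in\Psi(A)\setminus\A$ may lie in infinitely many pairwise distinct components, and neither the closed-graph property nor upper semi-continuity on $\mathcal O$ gives any control over the sequence $(c^k)\subseteq A$ (which need not be bounded) or over which component $\Psi(c^k)$ falls into. Without that constancy you cannot form the connected set $\B\cup\{\bar y\}$ and the contradiction collapses. Note also that the statement you are trying to prove---that $\A$ is open in $\Psi(A)$---is a claim about the topology of $\Psi(A)$ alone, whereas the available tools (usc on $\mathcal O$, closed graph) act on the parameter side; a successful argument for openness of $D$ should therefore stay on the preimage level rather than attempt to separate $\A$ from all of $\Psi(A)\setminus\A$ inside $\R^n$.
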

	\begin{proof}
		Recall that the mapping $\Psi$ has closed graph. Thanks to \Cref{lm:local}, local boundedness of $\Psi$ implies its upper semi-continuity on
		$\mathcal{O}$. Hence, we only need to consider the case that $\Psi$ is upper
		semi-continuous on $\mathcal{O}$.

		Suppose that $\Psi(A)$ is disconnected and $\A$ is a connected
		component of $\Psi(A)$. On the contrary, we
		assume that  $\A$ is bounded. Let $\B$ be another connected component of $\Psi(A)$. Then, there exists an open
		set $V$ containing $\A$ such that $V\cap \B=\emptyset$. Recall from \Cref{AB_disjoint} that the two sets
		$\Psi^{-1}(\A)$ and $\Psi^{-1}(\B)$ are disjoint.

		By the closedness of the
		graph of $\Psi$ and \Cref{A_closed}, $\Psi^{-1}(\A)$ is closed.
		We now point out that
		$\Psi^{-1}(\A)$ is open in $\dom \Psi$. Indeed, let $a$ be an
		arbitrary point in $\Psi^{-1}(\A)$, then $\emptyset\neq\Psi(a)\subseteq \A$ is bounded.
		It follows that $a\in \mathcal{O}$.
		By
		assumption, $\Psi$ is upper semi-continuous at $a$. With the open set $V$ defined
		above, there is an open set $U$ in $\dom \Psi$ such that $\Psi(a')\subseteq V$ for all
		$a'\in U$.
		Hence, the openness of $\Psi^{-1}(\A)$ in $\dom\Psi$ is
		proved.

		From the above, $\Psi^{-1}(\A)$ is both open and closed in the connected set $\dom \Psi$.
		It follows that $\Psi^{-1}(\A)=\dom \Psi$, while $\dom \Psi$ contains $\Psi^{-1}(\B)\neq \emptyset$. This is impossible because of  $\Psi^{-1}(\A)\cap \Psi^{-1}(\B)=\emptyset$. Thus, $\A$ must be unbounded.
		\qed
	\end{proof}

	To illustrate \Cref{thm:component}, we consider an example coming from linear fractional vector optimization.

	\begin{example}
		Consider the solution  mapping of a linear fractional bi-criteria
		optimization problem in  \cite[Section 4]{hoa2005parametric}, which has the feasible set
		$$K=\{(x_1,x_2)\in\R^2: x_1\geq 0, x_2\geq 0, x_1+x_2\geq 1\}$$
		and the objective functions
		$$f_1(x)=\frac{x_1+1}{2x_1+x_2}, \ \ f_2(x)=\frac{-x_1-2}{x_1+x_2}.$$

		\begin{figure}[h!]
			\centering
			\begin{tikzpicture}[domain=0:4]

				\draw[->] (-0.2,0) -- (4.2,0) node[right] {$x_1$};
				\draw[->] (0,-0.2) -- (0,4.2) node[above] {$x_2$};

				\draw[thick] (1,0) -- (4.2,0) node[midway,above,scale=0.8] {$\xi_1=\frac12$};
				\draw[thick,domain=6.2/11.4:2/3,smooth,variable=\t] plot ({(2-3*\t)/(2*\t-1)},2);
				\draw[<-] (0.1,2.1) -- (1,2.4);
				\node[scale=0.8] at (2,2.4) {$\frac23>\xi_1>\frac12$};
				\draw[->] (3,2.4) -- (4,2.3);
				\draw[thick] (0,2) -- (0,4.2) node[midway,right,scale=0.8] {$\xi_1=\frac23$};
				\draw[<-] (1,0.1) -- (1,0.7) node[above right,scale=0.8] {$\xi_1\in[0,\frac12)$};
				\filldraw[black] (1,0) circle (1.5pt);
			\end{tikzpicture}
			\caption{The image of $\Theta_1$ has two unbounded components.}
		\end{figure}

		The solution mapping $\Theta_1$ of the problem (see \Cref{subs:scalar})
		can be written as follows:
		$$\Theta_1(\xi_1)=\left\{\begin{array}{cl}
			\{(1,0)\} & \ \ \hbox{ if } \ 0\leq \xi_1 < \frac{1}{2}, \smallskip \\

			[1,+\infty)\times\{0\} & \ \ \hbox{ if } \  \ \ \ \xi_1 =\frac{1}{2},
			\smallskip \\

			\left\lbrace \Big(\frac{2-3\xi_1}{2\xi_1-1},2\Big)\right\rbrace   & \ \ \hbox{ if } \ \frac{1}{2} < \xi_1 <
			\frac{2}{3}, \smallskip \\
			\{0\}\times [2,+\infty) & \ \ \hbox{ if } \ \ \ \ \xi_1 =\frac{2}{3},
			\smallskip \\
			\emptyset & \ \ \hbox{ if } \ \frac{2}{3} < \xi_1 \leq 1. \\
		\end{array}\right.$$
		The domain of $\Theta_1$ is the interval
		$[0,\frac{2}{3}]$ which is connected.
		Moreover, the set $\mathcal{O}$ is $[0,\frac{1}{2})\cup (\frac{1}{2},\frac{2}{3})$
		and $\Theta_1$ is single-valued and continuous on $\mathcal{O}$. Hence, $\Theta_1$
		satisfies the condition in \Cref{thm:component}. The image has two connected
		components,
		both of them are unbounded.
	\end{example}

	The below theorem proposes another criterion to replace the connectedness of $\dom \Psi$
	in \Cref{thm:component}.

	\begin{theorem}\label{thm:com2}
		Assume that $A$ is connected, $\mathcal{O}$ is open in
		$A$, and $\Psi$ is upper semi-continuous or locally bounded  on $\mathcal{O}$.
		Then, if $\Psi(A)$ is disconnected, all its connected components are unbounded.
	\end{theorem}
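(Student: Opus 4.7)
The plan is to adapt the proof of \Cref{thm:component} almost verbatim, with $A$ playing the role that $\dom\Psi$ played there. We argue by contradiction: assume that $\A$ is a bounded connected component of $\Psi(A)$, and aim to prove that $\Psi^{-1}(\A)$ is a nonempty clopen subset of $A$. Connectedness of $A$ then forces $\Psi^{-1}(\A)=A$. Choosing any other component $\B$ of $\Psi(A)$ (which exists since $\Psi(A)$ is disconnected), \Cref{AB_disjoint} yields $\Psi^{-1}(\A)\cap\Psi^{-1}(\B)=\emptyset$, while $\Psi^{-1}(\B)\ne\emptyset$, giving the desired contradiction.

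Closedness of $\Psi^{-1}(\A)$ in $A$ is a direct consequence of \Cref{A_closed} applied to the bounded component $\A$. For openness in $A$, we mirror \Cref{thm:component}: fix $a\in\Psi^{-1}(\A)$. Since $\Psi(a)$ is connected and meets the component $\A$, it lies entirely in $\A$ and is therefore bounded, so $a\in\mathcal{O}$. The new ingredient, replacing the connectedness of $\dom\Psi$ used in \Cref{thm:component}, is the hypothesis that $\mathcal{O}$ is open in $A$: this yields an open neighborhood $U_0$ of $a$ in $A$ with $U_0\subseteq\mathcal{O}$, so $\Psi(a')\ne\emptyset$ for every $a'\in U_0$. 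By \Cref{lm:local}, local boundedness of $\Psi$ on $\mathcal{O}$ implies upper semi-continuity there, so in either branch of the hypothesis $\Psi$ is upper semi-continuous at $a$. Then, exactly as in \Cref{thm:component}, we take an open $V$ with $\A\subseteq V$ and $V\cap\B=\emptyset$, and obtain a further open neighborhood $U_1$ of $a$ with $\Psi(a')\subseteq V$ on $U_1$. For $a'\in U_0\cap U_1$, the value $\Psi(a')$ is a nonempty connected subset of $V\cap\Psi(A)$, and the same argument as in \Cref{thm:component} forces $\Psi(a')\subseteq\A$, i.e.\ $a'\in\Psi^{-1}(\A)$.

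The main obstacle, in comparison with \Cref{thm:component}, is precisely that an arbitrary neighborhood of $a\in\Psi^{-1}(\A)$ inside $A$ need not lie in $\dom\Psi$, so upper semi-continuity alone cannot convert $\Psi(a')\subseteq V$ into $a'\in\Psi^{-1}(\A)$: the empty value $\Psi(a')=\emptyset$ is vacuously contained in $V$ but does not meet $\A$. The hypothesis that $\mathcal{O}$ is open in $A$ is tailored exactly to bypass this issue, and enters the argument only at this single point.
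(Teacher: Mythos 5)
Your proof is correct and follows essentially the same route as the paper, which proves \Cref{thm:com2} simply by rerunning the argument of \Cref{thm:component} with the open--closed argument carried out in the connected set $A$ rather than in $\dom\Psi$. In fact you supply more detail than the paper's one-line proof, correctly isolating that the openness of $\mathcal{O}$ in $A$ is exactly what guarantees $\Psi(a')\neq\emptyset$ on a neighborhood of $a$, so that the upper semi-continuity step still yields $a'\in\Psi^{-1}(\A)$ and the clopen/contradiction ending via \Cref{AB_disjoint} goes through unchanged.
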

	\begin{proof}
		The proof is similar to the proof of \Cref{thm:component}. Note
		that the openness of $\Psi^{-1}(\A)$ in $\dom\Phi$ is
		proved under the assumptions that $\mathcal{O}$ is open in $A$
		and $A$ is connected.\qed
	\end{proof}

	The assumption related to the openness of $\mathcal{O}$ and local boundedness or upper
	semi-continuity of $\Psi$ on $\mathcal{O}$ in \Cref{thm:com2} seems heavy in general.
	However, it is satisfied for
	several cases such as solution mappings in strictly quasi-convex vector optimization
	(see \Cref{lm:strictly}), linear fractional vector
	optimization, or monotone affine/general vector inequalities
	\cite{yen2011monotone,hieu2021disconnectedness}.

	We consider the following example that comes from monotone affine vector inequalities in
	which the assumption in \Cref{thm:component} related to connectedness of the
	domain is not satisfied but the assumptions in \Cref{thm:com2} are fulfilled.

	\begin{example}\label{ex:dis2}
		We next consider the bi-criteria case of the monotone affine vector inequality
		given in the proof of  \cite[Theorem 3.2]{hieu2019numbers} whose
		feasible set is  $K=\R^2$, and the two mappings are
		$$F_1(x)=\begin{bmatrix}
			x_2+1 \\ -x_1+1

		\end{bmatrix}, \ F_2(x)=\begin{bmatrix}
			-x_2+1 \\ x_1+1
		\end{bmatrix}.$$
		The solution mapping $\Theta_1$ of the problem
		is as follows:
		$$\Theta_1(\xi_1)=\left\{\begin{array}{cl}
			\left\lbrace  \Big(\frac{1}{1-2\xi_1},\frac{-1}{1-2\xi_1}\Big)\right\rbrace  & \ \ \hbox{ if } \ 0 \leq \xi_1
			<\frac{1}{2} \ \hbox{ or } \ \frac{1}{2} < \xi_1
			\leq 1, \smallskip \\
			\emptyset & \ \ \hbox{ if } \qquad \qquad \ \, \xi_1 =\frac{1}{2}.\\
		\end{array}\right.$$
		Clearly, $\dom \Theta_1 = [0,\frac{1}{2})\cup (\frac{1}{2},1]=\mathcal{O}$ is not
		connected but open in $[0,1]$. One observers that $\Theta_1$ is
		single-valued and
		continuous
		on $\mathcal{O}$.
		Its image has two connected
		components, both are unbounded.
	\end{example}

	\begin{proposition}\label{prop:com3}
		Assume that $\Psi$ is upper semi-continuous on $\dom\Psi$ and that $\Psi^{-1}$ also has connected values. Then, $\Psi(A)$ is disconnected if and only if $\dom \Psi$ is so.
	\end{proposition}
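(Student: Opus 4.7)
The plan is to split the equivalence into two implications, exploiting the upper semi-continuity of $\Psi$ for the forward direction in the style of \Cref{thm:warb} and the connectedness of the fibres $\Psi^{-1}(v)$ for the reverse. For the direction ``$\Psi(A)$ disconnected $\Rightarrow \dom\Psi$ disconnected'', I would start from a separation $\Psi(A)\subseteq U\cup V$ by open sets $U,V\subseteq\R^n$ with $U\cap V\cap\Psi(A)=\emptyset$ and both $U\cap\Psi(A)$ and $V\cap\Psi(A)$ nonempty. Since each $\Psi(a)$ is connected and avoids $U\cap V$, it lies entirely in one of $U,V$; this partitions $\dom\Psi=A_U\sqcup A_V$ with $A_U:=\{a\in\dom\Psi:\Psi(a)\subseteq U\}$ and $A_V:=\{a\in\dom\Psi:\Psi(a)\subseteq V\}$, both nonempty, and the upper semi-continuity of $\Psi$ is precisely the statement that these two upper preimages are open in $\dom\Psi$, giving the desired clopen disconnection.

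For the reverse direction ``$\dom\Psi$ disconnected $\Rightarrow \Psi(A)$ disconnected'', take a clopen decomposition $\dom\Psi=A_1\sqcup A_2$ with both pieces nonempty. Mirroring \Cref{AB_disjoint}, the connectedness of each $\Psi^{-1}(v)$ forces $\Psi(A_1)\cap\Psi(A_2)=\emptyset$: any $v$ in the intersection would produce a connected set $\Psi^{-1}(v)\subseteq\dom\Psi$ meeting both clopen pieces. To upgrade this disjointness to a genuine disconnection of $\Psi(A)$, I would rule out that some $v\in\Psi(A_1)$ is approached by points of $\Psi(A_2)$: writing $v_k\in\Psi(a_k)$ with $a_k\in A_2$ and $v_k\to v$, if a subsequence of $\{a_k\}$ converges to some $a^*\in A$, the closed graph of $\Psi$ yields $v\in\Psi(a^*)$, so $a^*\in\Psi^{-1}(v)\subseteq A_1$, while simultaneously $a^*$ is the limit of a sequence from the set $A_2$ which is closed in $\dom\Psi$, forcing $a^*\in A_2$ and yielding the contradiction $a^*\in A_1\cap A_2=\emptyset$.

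The main obstacle is the sub-case in which $\{a_k\}$ admits no convergent subsequence in $A$, e.g.\ when it escapes to infinity in $\R^m$, since upper semi-continuity alone does not automatically bring compactness. I plan to dispose of this by using USC of $\Psi$ at a point $a\in A_1$ with $v\in\Psi(a)$ together with the closedness of $\graph\Psi$, in the spirit of the boundedness argument in \Cref{A_closed}. A parallel plan is to reorganize the reverse direction around connected components: applying \Cref{thm:warb} to each restriction $\Psi|_{E}$, where $E$ is a connected component of $\dom\Psi$ inheriting upper semi-continuity and connected values, produces a pairwise disjoint family of connected images $\{\Psi(E)\}$ which, under the connectedness of the fibres of $\Psi^{-1}$, should coincide with the components of $\Psi(A)$; the equivalence then follows from the resulting bijection between the components of $\dom\Psi$ and those of $\Psi(A)$.
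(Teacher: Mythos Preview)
Your forward direction is correct; it is exactly the contrapositive of \Cref{thm:warb}, and you are essentially reproving that theorem by hand. The genuine gap is in the reverse direction. You correctly isolate the obstacle---the sequence $\{a_k\}\subseteq A_2$ may have no subsequence converging in $A$---but neither proposed fix closes it. Upper semi-continuity of $\Psi$ at a point $a\in A_1$ controls $\Psi(a')$ only for $a'$ \emph{near} $a$; it gives no grip on a sequence $\{a_k\}$ that runs off to infinity. The component-wise ``parallel plan'' has the same defect: you obtain pairwise disjoint connected images $\{\Psi(E)\}$, but disjointness alone does not yield a separation of $\Psi(A)$ (think of $[0,1]=[0,\tfrac12)\cup[\tfrac12,1]$), and showing that no $\Psi(E)$ touches the closure of another is precisely the limit argument you cannot complete.

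The idea you are missing---and the whole content of the paper's proof---is that the situation is symmetric once one observes that $\Psi^{-1}$ is itself upper semi-continuous. Indeed, $\Psi^{-1}$ has closed graph and its image $\dom\Psi\subseteq A$ is bounded, so $\Psi^{-1}$ is locally bounded and hence upper semi-continuous by \Cref{lm:local}. Since $\Psi^{-1}$ also has connected values by hypothesis, \Cref{thm:warb} applied to $\Psi^{-1}$ gives ``$\Psi(A)$ connected $\Rightarrow\dom\Psi$ connected'', which is exactly the reverse implication. In your language, boundedness of $A$ is what supplies a convergent subsequence of $\{a_k\}$ and dissolves your obstacle; but rather than run the sequence argument at all, the paper simply invokes \Cref{thm:warb} a second time, on $\Psi^{-1}$.
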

	\begin{proof}
		Clearly, $\Psi^{-1}$ has closed graph and its image, namely $\dom
		\Psi\subseteq A$,
		is bounded. Thanks to \Cref{lm:local}, $\Psi^{-1}$ is upper semi-continuous on
		$\Psi(A)$. Hence, both $\Psi$  and $\Psi^{-1}$ are upper semi-continuous on their
		domains and have connected values. Therefore, the conclusion is implied from \Cref{thm:warb}.\qed
	\end{proof}

	\begin{example}
		We consider the mapping given in \Cref{ex:dis2}, which
		satisfies the condition in \Cref{prop:com3}.
		Clearly, the number of connected components of $\dom \Theta_1$ equals the one
		of the image $\Theta_1([0,1])$.
	\end{example}

	\section{Application to vector optimization without generalized convexity}\label{sec_dis}

	We study unboundedness of solution sets in vector optimization through
	solut\-ion mappings associated to specific scalarization methods.
	Let us recall solution concepts and fundamental results in vector
	optimization. 

	Let $K$ be a nonempty and closed 
	subset of $\R^n$ and
	$f=(f_1,\dots,f_m)$ be a continuous mapping from $K$ to $\R^m$.
	The vector minimization problem with the feasible set $K$, vector objective
	function $f$ and nonnegative orthant $\R^m_+$ as ordering cone is written formally as follows:
	\begin{equation*}
		\label{VOP} \VOP(K,f)\qquad {\rm Minimize}\ \; f(x)\ \ {\rm subject \ to}\ \  x\in K.
	\end{equation*}
	A point $x\in K$ is said to be  a \textit{Pareto solution} of $\VOP(K,f)$ if there exists no $y\in K$ such that
	$f(y)-f(x)$ belongs to $-\R^m_+\setminus\{0\}$.
	It is said to be  a {\it weak Pareto solution} of $\VOP(K,f)$ if there does not exist a $y\in K$
	such that $f(y)-f(x)$ belongs to $-\inte\R^m_+$. The Pareto solution set and weak Pareto
	solution set of this problem are respectively denoted by $\Sol(K,f)$ and $\Solw(K,f)$.


	\begin{remark}\label{rmk:Solw}
		Let $x$ be given in $K$. It is $x\in\Solw(K,f)$ if and only if for any $y\in K$ there exists $k\in\{1,\dots,m\}$ such that $f_k(y) \geq f_k(x)$. Indeed, $x\in\Solw(K,f)$ is equivalent to the condition that for every $y\in K$ it is $f(y)-f(x)\not\in-\inte\R^m_+$, which is the same as $f_k(y)\geq f_k(x)$ for some index $k$.
	\end{remark}

	If $f$ is a scalar function, so $f:K\to\R$, then we point out the fact, that in this case the problem $\VOP(K,f)$ is the well known scalar minimization problem, by writing it as $\OP(K,f)$. In this case the (weak) Pareto solutions are the minimizers of $f$ over $K$.

	Let $g: K \to \R$ be a function. The function $g$ is said to be \textit{bounded from below by $\gamma\in\R$} on $K$ if $g(x)\geq \gamma$ for all $x\in K$ and \textit{strictly bounded from below by $\gamma$} on $K$ if $g(x)>\gamma$ for all $x\in K$.

	\subsection{Two solution mappings}\label{subs:scalar}

	We now consider the $(m-1)$-standard simplex in $\R^m$
	\begin{equation*}\label{eq:Delta}
		\Delta\coloneqq\{(\xi_1,\dots,\xi_m)\in\R_+^m:\xi_1+\dots+\xi_m=1\}.
	\end{equation*}
	Then, its relative interior $\ri \Delta$ is the set of all $\xi$ in $\Delta$
	whose coordinates are positive. Clearly, $\Delta$ is nonempty, convex and compact.
	For each $\xi$ in $\Delta$, we define
	$$f_{\xi}\coloneqq\xi_1 f_1+\dots+\xi_m f_m.$$
	The solution mapping associated to this scalarization method of the vector
	optimization problem $\VOP(K,f)$ is defined and denoted as
	follows:
	\begin{equation}\label{eq:theta1}
		\Theta_1(\xi)\coloneqq\Sol(K,f_{\xi}),
	\end{equation}
	where $\Sol(K,f_{\xi})$ is the solution set
	of the scalar problem $\OP(K,f_{\xi})$.

	The mapping $\Theta_1$ is an effective tool to investigate the weak Pareto
	solution set and Pareto solution set of a vector optimization problem because of
	the following results which can be found in, for example, \cite[Proposition~3.9]{ehrgott2005multicriteria}
	or \cite[Corollaries~4.1, 4.2 \& 4.5]{ansari2018vector}:

	\begin{lemma}\label{lm:scalar}
		The following inclusions hold for the vector optimization problem $\VOP(K,f)$:
		\begin{equation}\label{eq:scalar}
			\Theta_1(\Delta) \subseteq \Solw(K,f) \ \text{and} \ \ \Theta_1(\ri\Delta) \subseteq
			\Sol(K,f).
		\end{equation}
		Furthermore, if $K$ is convex and $f$ is convex, then  $\Theta_1(\Delta)=\Solw(K,f)$.
	\end{lemma}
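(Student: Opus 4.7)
The plan is to establish the two inclusions in \eqref{eq:scalar} by short contradiction arguments that exploit the sign structure of the simplex weights, and then to derive the converse inclusion $\Solw(K,f) \subseteq \Theta_1(\Delta)$ under the convexity hypothesis by a standard separation argument in $\R^m$.

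For the first inclusion, I would take $x \in \Theta_1(\xi) = \Sol(K, f_\xi)$ with $\xi \in \Delta$ and suppose for contradiction that $x \notin \Solw(K,f)$. By \Cref{rmk:Solw} there exists $y \in K$ with $f_i(y) < f_i(x)$ for every index $i$. Since $\xi_i \geq 0$ and $\sum_i \xi_i = 1$, at least one coordinate $\xi_{i_0}$ is strictly positive; multiplying the strict inequalities by the weights and summing yields $f_\xi(y) < f_\xi(x)$, contradicting that $x$ minimizes $f_\xi$ on $K$. The inclusion $\Theta_1(\ri\Delta) \subseteq \Sol(K,f)$ follows by the same pattern: with $\xi \in \ri \Delta$ every weight is strictly positive, so any $y \in K$ realizing $f(y) - f(x) \in -\R^m_+ \setminus \{0\}$ produces the strict inequality $f_\xi(y) < f_\xi(x)$ after weighting and summing, again contradicting the optimality of $x$.

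For the equality under convexity, let $x \in \Solw(K,f)$ and set
\[
C \coloneqq f(K) - f(x) + \R^m_+.
\]
Convexity of $K$ and of each component $f_i$ makes $C$ convex, and the weak Pareto property of $x$ translates to $C \cap (-\inte\R^m_+) = \emptyset$. A standard separation theorem then furnishes a nonzero $\xi \in \R^m$ and $\alpha \in \R$ such that $\xi^{\top} z \geq \alpha$ for every $z \in C$ and $\xi^{\top} w \leq \alpha$ for every $w \in -\inte\R^m_+$. Since $0 \in C$ and the origin lies in the closure of $-\inte\R^m_+$, one obtains $\alpha = 0$; scaling a fixed $w \in -\inte\R^m_+$ by arbitrarily large positive factors then forces $\xi \in \R^m_+$. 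After rescaling so that $\xi \in \Delta$, the separation inequality on $C$ reads $\xi^{\top}(f(y) - f(x)) \geq 0$ for every $y \in K$, which says $x \in \Sol(K, f_\xi) = \Theta_1(\xi) \subseteq \Theta_1(\Delta)$.

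The main obstacle will be the last step: identifying the right convex set $C$, verifying that the separation constant must vanish, and extracting a nonzero normal vector that lies in $\R^m_+$ rather than merely in $\R^m$. The two forward inclusions are, by contrast, direct consequences of manipulating nonnegative (respectively positive) linear combinations.
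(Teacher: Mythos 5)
Your proposal is correct; the paper itself gives no proof of this lemma but cites it from the literature (Ehrgott, Ansari et al.), and your argument is exactly the standard one behind those citations: nonnegative (resp.\ positive) weighted sums for the two inclusions in \eqref{eq:scalar}, and separation of the convex set $f(K)-f(x)+\R^m_+$ from $-\inte\R^m_+$ for the converse under convexity. The only small imprecision is in extracting $\xi\in\R^m_+$: scaling a single fixed $w\in-\inte\R^m_+$ only yields $\xi^{\top}w\leq 0$ for that $w$; to get componentwise nonnegativity you should let $w$ tend to $-e_i$ from inside the cone (e.g.\ $w=-e_i-\varepsilon\mathbf{1}$, $\varepsilon\downarrow 0$) for each $i$, after which the rest of your argument goes through verbatim.
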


	\begin{remark}\label{lm:graph1}
		The solution mapping $\Theta_1$ has closed graph.
		Indeed, consider a convergent sequence $(\xi^k,x^k)$ in
		\begin{center}
			$\graph\Theta_1=\{(\xi,x)\in \Delta\times K:x\in\Theta_1(\xi)\}$
		\end{center}
		such that $(\xi^k,x^k)\to (\bar \xi,\bar x)$. By the closedness of $\Delta$ and
		$K$,
		$\bar \xi$ and $\bar x$ belong to $\Delta$ and $K$, respectively. Let $x\in K$
		be arbitrarily given. For
		each $k$, one
		has
		$f_{\xi^k}(x)\geq f_{\xi^k}(x^k)$. Take $k\to +\infty$, by the continuity of $f$,
		we
		get $f_{\bar\xi}(x)\geq f_{\bar\xi}(\bar x)$. It follows that $\bar x \in
		\Sol(K,f_{\bar\xi})$.
	\end{remark}

	We consider another solution mapping, which was introduced in
	\cite{warburton1983quasiconcave} and is also very useful for investigating solution
	sets of vector optimization problems governed by
	quasi-convex functions.
	For $\xi$ in $\Delta$, we define
	\begin{equation}\label{eq:fxi2}
		f^{\xi}(x) \coloneqq \max\left\lbrace \xi_1f_1(x), \ \dots \, , \
		\xi_mf_m(x)\right\rbrace.
	\end{equation}
	The solution mapping $\Theta_2:\Delta\rightrightarrows \R^n$ associated to this
	scalarization is given by
	\begin{equation}\label{eq:theta2}
		\Theta_2(\xi)=\Sol(K,f^{\xi}).
	\end{equation}

	\begin{remark}\label{rmk:Theta2_Solw}
		One has $\Theta_2(\ri\Delta)\subseteq\Solw(K,f)$. Indeed, for any $x\in\Theta_2(\ri\Delta)$, there exists $\xi\in\ri\Delta$ with $x\in\Sol(K,f^\xi)$. This implies that for every $y\in K$ there exists an index $k\in\{1,\dots,m\}$ such that for all $i\in\{1,\dots,m\}$ we have
		\[f^\xi(y)=\xi_kf_k(y)\geq\xi_if_i(x).\]
		For $i=k$ we therefore get $\xi_kf_k(y)\geq\xi_kf_k(x)$. Since $\xi\in\ri\Delta$ we have $\xi_k>0$, so that we arrive at $f_k(y)\geq f_k(x)$, which shows $x\in\Solw(K,f)$ (see \Cref{rmk:Solw}).

		Suppose that all objective functions $f_1,\dots,f_m$ are even strictly bounded from below by $0$ on $K$. Then, according to \cite[Theorem 3.4]{warburton1983quasiconcave}, we have the equality
		$\Theta_2(\ri\Delta)=\Solw(K,f)$.
		If additionally $\Theta_2$ is single-valued on its domain, then
		$\Sol(K,f)=\Solw(K,f)$, see \cite[Theorem 3.5]{warburton1983quasiconcave}.
	\end{remark}

	In the following theorem, we prove that, if the objective functions are bounded from below by $0$ on $K$, then the weak Pareto solution set is the image of $\Delta$ under $\Theta_2$.
	This theorem which is an improvement of  \cite[Theorem~3.4]{warburton1983quasiconcave} plays a vital role in the proof of Theorem \ref{thm:semistrict} in the next section.

	\begin{theorem}\label{lm:Theta2}
		If all objective functions $f_1,\dots,f_m$ are bounded from below by $0$ on $K$, then we have
		\begin{equation}\label{eq:SolwTheta2}
			\Solw(K,f)=\Theta_2(\Delta).
		\end{equation}
	\end{theorem}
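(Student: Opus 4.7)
The plan is to prove the two inclusions $\Theta_2(\Delta)\subseteq\Solw(K,f)$ and $\Solw(K,f)\subseteq\Theta_2(\Delta)$ separately. The guiding principle is that the nonnegativity hypothesis on each $f_i$ replaces the strict lower bound in \cite[Theorem~3.4]{warburton1983quasiconcave}: it permits the scalarization parameter $\xi$ to touch the boundary of $\Delta$, which is exactly what is needed to recover the full set $\Solw(K,f)$ rather than only $\Theta_2(\ri\Delta)$.

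For the inclusion $\Theta_2(\Delta)\subseteq\Solw(K,f)$, I would argue by contraposition. Suppose $x\in\Theta_2(\xi)=\Sol(K,f^{\xi})$ for some $\xi\in\Delta$ but $x\notin\Solw(K,f)$, so there exists $y\in K$ with $f_i(y)<f_i(x)$ for every $i$. Since $\xi_i\geq 0$, the termwise inequality $\xi_if_i(y)\leq\xi_if_i(x)$ holds for each $i$, hence $f^{\xi}(y)\leq f^{\xi}(x)$. Combined with the optimality bound $f^{\xi}(y)\geq f^{\xi}(x)$ this forces equality; picking an index $j$ attaining the maximum on the left, a short case analysis shows $\xi_j=0$, whence $f^{\xi}(x)=0$. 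This in turn yields $\xi_if_i(x)=0$ for all $i$ (each term is nonnegative and their maximum is $0$), so $f_i(x)=0$ whenever $\xi_i>0$. Because $\sum_i\xi_i=1$, some such index exists, and then $f_i(y)<f_i(x)=0$ contradicts $f_i\geq 0$ on $K$.

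For the reverse inclusion $\Solw(K,f)\subseteq\Theta_2(\Delta)$, given $x\in\Solw(K,f)$ I would split on the index set $I(x)\coloneqq\{i:f_i(x)=0\}$. If $I(x)=\emptyset$, I would mimic Warburton's normalization: set $\xi_i= c/f_i(x)$ with $c=\bigl(\sum_j 1/f_j(x)\bigr)^{-1}$, which lies in $\ri\Delta$ and satisfies $\xi_if_i(x)=c$ for every $i$, so $f^{\xi}(x)=c$; any $y\in K$ with $f^{\xi}(y)<c$ would give $f_i(y)<f_i(x)$ for all $i$, contradicting $x\in\Solw(K,f)$ via \Cref{rmk:Solw}. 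If $I(x)\neq\emptyset$, I would take $\xi$ supported on $I(x)$ with equal positive weights there and zeros elsewhere, so that $f^{\xi}(x)=0$; nonnegativity of each $f_i$ on $K$ then yields $f^{\xi}(y)\geq 0=f^{\xi}(x)$ for every $y\in K$, placing $x$ in $\Sol(K,f^{\xi})\subseteq\Theta_2(\Delta)$.

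The main obstacle is the degenerate situation in which $f^{\xi}(x)=0$ or some coordinate $f_i(x)$ vanishes, since these are precisely the configurations that Warburton's strict positivity assumption avoids. The delicate point is that in this regime the auxiliary function $f^{\xi}$ can be rather insensitive to strict component-wise improvements, and the argument only closes by combining admission of boundary $\xi\in\Delta\setminus\ri\Delta$ with the sign condition $f_i\geq 0$ to rule out the cancellation that otherwise could occur inside the maximum defining $f^{\xi}$.
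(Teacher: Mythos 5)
Your proof is correct, and it differs from the paper's in one of the two inclusions. For $\Solw(K,f)\subseteq\Theta_2(\Delta)$ you follow essentially the same route as the paper: split on whether some coordinate $f_i(x)$ vanishes, use Warburton's harmonic weights $\xi_i\propto 1/f_i(x)$ when $f(x)\in\inte\R^m_+$, and otherwise a $\xi$ supported on the vanishing indices (the paper takes a single unit vector $e^k$, you take equal weights on $I(x)$ --- a cosmetic difference). For the reverse inclusion $\Theta_2(\Delta)\subseteq\Solw(K,f)$, however, the paper proceeds by cases: it quotes \Cref{rmk:Theta2_Solw} for $\xi\in\ri\Delta$ and then treats $\xi$ on the relative boundary via the support set $I(\xi)$, splitting further on whether $f(x)$ has a zero component. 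Your contrapositive argument replaces all of this with a single uniform chain: componentwise domination $f_i(y)<f_i(x)$ plus $\xi\geq0$ gives $f^\xi(y)\leq f^\xi(x)$, optimality forces equality, the index attaining $f^\xi(y)$ must have $\xi_j=0$, hence $f^\xi(x)=0$, hence $f_i(x)=0$ on the support of $\xi$, and nonnegativity of $f_i$ on $K$ is contradicted by $f_i(y)<0$. This exploits the monotonicity of $\xi\mapsto\max_i\xi_if_i$ under componentwise improvement, which the paper does not use explicitly; what it buys is a shorter, case-free treatment of the ``easy'' inclusion that makes transparent exactly where the bound $0$ (as opposed to an arbitrary $\gamma$) enters, while the paper's version has the organizational advantage of reusing \Cref{rmk:Theta2_Solw} so that only the boundary parameters require new work.
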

	\begin{proof}
		For $m=1$ this is evident, so from now on we assume $m>1$.
		To prove \eqref{eq:SolwTheta2}, we first prove $\Solw(K,f)\subseteq\Theta_2(\Delta)$. For this, we consider any $x\in\Solw(K,f)$. If there exists an index $k\in\{1,\dots,m\}$ with $f_k(x)=0$, then $x\in\Sol(K,f^\xi)$, where $\xi=e^k$ is the $k$-th standard unit vector of $\R^m$. Indeed, for any $y\in K$ one has $f^\xi(y)=\max\{0,f_k(y)\}\geq0$. It is clear that $f^\xi(x)=0$, which implies  $x\in\Sol(K,f^\xi)$. From now on we assume $f(x)\in\inte\R^m_+$ and define $\xi\in\R^m$ by
		\begin{equation}
			\xi_i:=\frac{1/f_i(x)}{1/f_1(x) +\dots+ 1/f_m(x)}, \quad i=1,\dots,m. \label{gl:Theta2Xi}
		\end{equation}
		Then, it is $\xi\in\ri\Delta$ and we get $$f^\xi(x)=\frac1{1/f_1(x) +\dots+ 1/f_m(x)}.$$
		One has  $$\xi_if_i(x)=\frac1{1/f_1(x) +\dots+ 1/f_m(x)},$$
		for all $i\in\{1,\dots,m\}$. Because of $x\in\Solw(K,f)$, for any $y\in K$, there exists an index $k\in\{1,\dots,m\}$ with $f_k(y)\geq f_k(x)$ implicating
		\[
		\begin{aligned}
			f^\xi(y)\geq\xi_kf_k(y) & =\frac{f_k(y)/f_k(x)}{1/f_1(x) +\dots+ 1/f_m(x)}  \\
			&\geq\frac1{1/f_1(x) +\dots+ 1/f_m(x)}=f^\xi(x),
		\end{aligned}
		\]
		which shows $x\in\Sol(K,f^\xi)$.

		We now prove that $\Theta_2(\Delta)\subseteq\Solw(K,f)$.  \Cref{rmk:Theta2_Solw} points out that $\Theta_2(\ri\Delta)\subseteq\Solw(K,f)$. Thus, we only need to show $\Theta_2(\partial_r\Delta)\subseteq\Solw(K,f)$,

		where $\partial_r\Delta$ denotes the relative boundary $\partial_r\Delta=\overline\Delta\setminus\ri\Delta$.
		Consider $x\in\Theta_2(\partial_r\Delta)$. Again, there exists $\xi\in\partial_r\Delta$ such that $x\in\Sol(K,f^\xi)$. If there exists an index $k\in\{1,\dots,m\}$ with $f_k(x)=0$, then because of the lower boundedness we immediately get $x\in\Solw(K,f)$, so that from now on we can assume $f(x)\in\inte\R^m_+$. Define
		\[I(\xi)\coloneqq\{i\in\{1,\dots,m\}\mid\xi_i>0\}.\]
		Then, since for all $i\in\{1,\dots,m\}\setminus I(\xi)$, we have $\xi_if_i(x)=0$ it is
		\begin{equation}\label{eq:fI}
			f^\xi(x)=\max_{i\in I(\xi)}\xi_if_i(x)>0.
		\end{equation}
		Now, as before, for every $y\in K$ there exists an index $k\in\{1,\dots,m\}$ such that for all $i\in\{1,\dots,m\}$ it is $\xi_kf_k(y)\geq\xi_if_i(x)$ and from $f^\xi(x)>0$ we actually get $k\in I(\xi)$, which again allows to conclude $f_k(y)\geq f_k(x)$ and therefore, according to \Cref{rmk:Solw}, $x\in\Solw(K,f)$, which proves \eqref{eq:SolwTheta2}. \qed
	\end{proof}


	\begin{remark}
		In \cite[p.~543]{warburton1983quasiconcave} it is stated that the claim of \cite[Theorem 3.4]{warburton1983quasiconcave} remains correct for $f$ bounded from below by any value $\gamma$. However, we want to point out that 
		the bound zero is of utmost importance and cannot be generalized without further assumptions. In the following, we can see a counter-example in which  $\Solw(K,f)\neq\Theta_2(\Delta)$.
	\end{remark}

	\if
	Hieu: I would like to choose the first example to illustrate the last remark because it is the simplest, and all readers (even reviewers) can follow easily/quickly. Could you i) revise the example (don't need to use extra notations such as X, Y, WMin, id) \Paul{done, could you please check whether I used the right notations? For example, I use $\subseteq$ when you use $\subseteq$ and $\subseteq$ when you use ... $\subsetneq$? Also when refering to earlier lemmas, remarks, ... I write them lowercase, while you use uppercase - correct?. I tried to adapt, but I am quite bad at finding such things.}, ii) the picture: "dashed" is hard to look, the red color may not appear in the printed version

	Paul: I changed the picture a little. Do you have ideas about how to replace the colours? Or wishes where to put the labels? I would make it thicker and after that I'm lost...
	\fi

	\begin{example}
		Consider $f:\R^2\to\R^2$, $f(x)\coloneqq x$ and $K\coloneqq[-1,0]\times[0,1]$. It can easily be seen that
		\[\Solw(K,f)=\big([-1,0]\times\{0\}\big)\cup\big(\{-1\}\times[0,1]\big).\]
		For any $\xi\in\Delta$
		and $x\in K$, it is $\max\{\xi_1x_1,\xi_2x_2\}=\xi_2x_2$, because the first part of the maximum is always non-positive while the second part is always non-negative. Therefore, we have
		\[\Theta_2(\xi)=\begin{cases}
			[-1,0]\times\{0\} & \text{if}\ \xi_2\neq0 \\
			K & \text{if}\ \xi_2=0
		\end{cases}\]
		which gives
		\[\Theta_2(\ri\Delta)=[-1,0]\times\{0\}\subsetneq\Solw(K,f)\subsetneq K=\Theta_2(\partial_r\Delta)=\Theta_2(\Delta).\]
		To illustrate the sets, in \Cref{fig:enter-label}, $\Theta_2(\ri\Delta)$ is colored by blue and dashed, and $\Solw(K,f)$ is colored by red.
	\end{example}

	\begin{figure}
		\centering
		\begin{tikzpicture}[>=latex]
			\begin{axis}[
				clip=false,
				axis x line=center,
				axis y line=center,
				xtick={-1,0},
				ytick={0,1},
				xlabel={$x_1$},
				ylabel={$x_2$},
				every y tick label/.style={anchor=near yticklabel opposite,	xshift=0.2em,},
				ylabel style={above left},
				xmin=-1.3,
				xmax=0.3,
				ymin=-0.2,
				ymax=1.3
				]
				\let\tikzerror\relax
				\draw (0,0) -- (-1,0) -- (-1,1) -- (0,1) -- (0,0);
				\draw[pattern=dots] (-1,0) rectangle (0,1) node at (-0.3,0.7) {$\Theta_2(\Delta)$};
				\draw[also in front={red}{line width=.5\pgflinewidth},ultra thick] (0,0) -- (-1,0) -- (-1,1) node at (-0.7,0.3) {$\Solw(K,f)$};
				\draw[also behind={}{blue},also in front={blue,dashed}{line width=.5\pgflinewidth,dash phase=3pt},blue,ultra thick] (-1,0) -- (0,0) node[midway, below] {$\Theta_2(\ri\Delta)$};
			\end{axis}
		\end{tikzpicture}
		\hfill\caption{The relationship $\Theta_2(\ri\Delta) \varsubsetneq \Solw(K,f) \varsubsetneq \Theta_2(\Delta)$.}
		\label{fig:enter-label}
	\end{figure}

	\begin{remark}\label{lm:graph2}
		Similarly to $\Theta_1$,
		the solution mapping $\Theta_2$ also has closed graph. To demonstrate this remark,
		we
		consider a convergent sequence $(\xi^k,x^k)$ in
		\begin{center}
			$\graph\Theta_2=\{(\xi,x)\in \Delta\times K:x\in\Theta_2(\xi)\}$
		\end{center}
		such that $(\xi^k,x^k)\to (\bar \xi,\bar x)$. By the closedness of $\Delta$ and
		$K$,
		$\bar \xi$ and $\bar x$ belong to $\Delta$ and $K$, respectively. Let $x\in K$
		be arbitrarily given. For
		each $k$, one
		has
		$$\max\left\lbrace \xi^k_1f_1(x), \ \dots, \ \xi^k_mf_m(x)\right\rbrace  \ \geq \
		\max\left\lbrace \xi^k_1f_1(x^k), \
		\dots,
		\ \xi^k_mf_m(x^k)\right\rbrace .$$
		Take $k\to +\infty$. By the continuity of $f$,
		we get $f^{\bar\xi}(x)\geq f^{\bar\xi}(\bar x)$. It follows that $\bar x \in
		\Sol(K,f^{\bar\xi})$.
	\end{remark}

	\subsection{Problems without imposing any generalized convexity}

	We emphasize that, in the following theorem, we do not need any generalized convexity
	assumptions for the objective functions. By \Cref{lm:graph1,lm:graph2} the
	results obtained in \Cref{sec:non_closed}
	can be applied to both $\Theta_1$ and $\Theta_2$.
	We prefer to omit a detailed formulation of the results related to $\Theta_2$, which
	requires additionally that the objective functions are bounded from below by $0$.

	\begin{theorem}\label{thm:VOP_nonconvex}
		Assume that the domain of the solution
		mapping $\Theta_1$ given in \eqref{eq:theta1} is
		non-closed. Then, the weak Pareto solution set is unbounded. Moreover, the Pareto
		solution set is unbounded provided that $(\ri\Delta)\cap\dom\Theta_1$ is dense in $\dom\Theta_1$.
	\end{theorem}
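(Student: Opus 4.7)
The proof is a direct application of \Cref{thm:main_sa} to the solution mapping $\Theta_1:\Delta\rightrightarrows\R^n$ in the role of the mapping $\Phi:A\rightrightarrows\R^n$, using the set $A=\Delta$. My plan is to verify the hypotheses of \Cref{thm:main_sa} for $\Theta_1$ and then push the resulting unboundedness conclusions through the two inclusions of \Cref{lm:scalar}.

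First I would note that $\Theta_1$ has closed graph by \Cref{lm:graph1}, so together with the standing assumption that $\dom\Theta_1$ is non-closed, the hypotheses of the first conclusion of \Cref{thm:main_sa} are satisfied (with $A=\Delta$, which is a nonempty subset of $\R^m$). This yields that the image $\Theta_1(\Delta)$ is unbounded. Combined with the inclusion $\Theta_1(\Delta)\subseteq\Solw(K,f)$ from \Cref{lm:scalar}, this immediately gives unboundedness of $\Solw(K,f)$.

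For the second assertion, I would invoke the moreover part of \Cref{thm:main_sa}: under the additional density hypothesis that $(\ri\Delta)\cap\dom\Theta_1$ is dense in $\dom\Theta_1$, the image $\Theta_1(\ri\Delta)$ itself is unbounded. By the second inclusion in \Cref{lm:scalar}, namely $\Theta_1(\ri\Delta)\subseteq\Sol(K,f)$, we conclude that $\Sol(K,f)$ is unbounded as well.

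There is no substantive obstacle here, since this theorem is essentially a translation of the abstract result in \Cref{thm:main_sa} into the vector-optimization setting. The only points one must be careful about are bookkeeping in nature: confirming that $\Theta_1$ indeed fits the framework of \Cref{sec:non_closed} (closed graph and nonempty parameter set $\Delta$), and correctly matching the two scalarization inclusions of \Cref{lm:scalar} to the two conclusions of \Cref{thm:main_sa}.
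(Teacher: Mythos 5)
Your proposal is correct and follows exactly the paper's own argument: closed graph of $\Theta_1$ from \Cref{lm:graph1}, then \Cref{thm:main_sa} applied with $A=\Delta$, and the two inclusions in \eqref{eq:scalar} from \Cref{lm:scalar} to transfer unboundedness of $\Theta_1(\Delta)$ and $\Theta_1(\ri\Delta)$ to $\Solw(K,f)$ and $\Sol(K,f)$, respectively. No issues.
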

	\begin{proof}
		According to \Cref{lm:graph1}, the solution
		mapping $\Theta_1$ has
		closed graph. The conclusions are obtained straightforwardly by applying the
		conclu\-sions from \Cref{thm:main_sa} 
		and the inclusions in \eqref{eq:scalar}. \qed
	\end{proof}

	\begin{remark}
		Let $\A$ be a connected
		component of $\Theta_1(\Delta)$, where $\Theta_1$ is given in \eqref{eq:theta1}. If
		$\Theta_1^{-1}(\A)$ is non-closed, then $\A$ is unbounded. Indeed, according to \Cref{lm:graph1}, $\Theta_1$ has
		closed graph. By applying straightforwardly
		\Cref{rmk:A}, we obtain the conclusion.
	\end{remark}

	\begin{example}\label{ex:nonconvex}
		Consider the bicriteria optimization problem $\VOP(K,f)$ in $\R^2$, with the
		feasible set $K=\R^2$ and two objective functions given by
		$$f_1(x)=x_1^4-2x_2, \quad f_2(x)=-2x_1^2+x_2^2.$$
		One can see that the functions are not bounded from below on $K$.
		Thus, the two end points $(0,1),(1,0)$ in $\Delta$ do not belong to the domain of the
		solution mapping $\Theta_1$ of $\VOP(K,f)$.
		Let $\xi\in\ri \Delta$, equivalently $\xi_1\in (0,1)$. One has
		$f_{\xi}=\xi_1 f_1+ (1-\xi_1)f_2$ and an easy computation shows that
		$$f_{\xi}(x_1,x_2)=\xi_1\left( x_1^2-\frac{1-\xi_1}{\xi_1}\right)^2+(1-\xi_1)
		\left(x_2-\frac{\xi_1}{1-\xi_1}\right)^2+h(\xi_1),$$
		where $h$ is a function depended only on $\xi_1$. Since the first two terms in
		$f_{\xi}$ are nonnegative, $f_{\xi}$ reaches its minimum if and only if these terms are zero.
		Hence, the solution
		mapping $\Theta_1$ of $\VOP(K,f)$ is as given in \eqref{eq:ex1} in \Cref{ex:dis1} and its domain
		is
		$$\dom\Theta_1=\Delta\setminus\{(0,1),(1,0)\}=\ri \Delta.$$
		This set is non-closed and
		its image is given as in \eqref{eq:ex1_sol}. Hence, according to \Cref{thm:VOP_nonconvex}, the
		Pareto solution set and weak Pareto solution set of the problem are unbounded as
		$\Sol(K,f) \supseteq \Theta_1([0,1])$.
	\end{example}

	\begin{remark}\label{rmk:nonclosed_Theta2} By repeating the argument in the proof of \Cref{thm:VOP_nonconvex}, we have the same conclusion for the mapping $\Theta_2$ given in \eqref{eq:theta2}. If the objective functions are bounded from below by $0$ and the domain of the solution
		mapping $\Theta_2$ given in \eqref{eq:theta2} is
		non-closed, then the weak Pareto solution set is unbounded.
	\end{remark}

	\section{Application to quasi-convex vector optimization}\label{sec:convex}

	We now recall some notions related to generalized convexity. We refer to the monographs
	\cite{ansari2013generalized,avriel2010generalized} for a general introduction to
	generalized convexity.

	Consider a function $g: K \to \R$.  This function is \textit{quasi-convex} on $K$
	if for $x, y$ in $K$, $x\neq y$, it is
	\begin{equation}\label{eq:qcx}
		g(tx+(1-t)y) \leq \max\{g(x),g(y)\},
	\end{equation}
	for all $t\in (0,1)$. Equivalently, $g$ is quasi-convex if and only if the level set
	$[g\leq \lambda]$
	is a convex set for all $\lambda \in \R$. Hence, the solution set of the
	optimization problem $\OP(K,g)$ is convex. In addition, when the inequality in
	\eqref{eq:qcx} is replaced by the
	strict one, we say that $g$ is \textit{strictly} quasi-convex on $K$. Recall that if
	$g$ is strictly quasi-convex, then the solution set of
	$\OP(K,g)$ has at most one element.

	\subsection{Stability of solutions in parametric optimization under quasi-convexity}

	Let $T$ be a convex subset of $\R^k$ and $g:K\times T\to \R$ be continuous on $K\times T$.
	We consider the parametric
	optimization problem $\OP(K,g(\cdot,t))$, where $t\in T$, and define the solution
	mapping $\Theta$ as follows:
	\begin{equation}\label{eq:theta}
		\Theta(t)\coloneqq\Sol(K,g(\cdot,t)).
	\end{equation}
	The following lemma about stability of solutions in quasi-convex optimization plays an important role in this section; its proof can be found in
	\cite[Theorem~4.3.3]{bank1983non}:

	\begin{lemma}\label{lm:usc_sol}
		Assume that, for any parameter $t\in T$, $g(\cdot,t)$ is quasi-convex on $K$.
		Then, the solution mapping
		$\Theta$ defined in \eqref{eq:theta} is upper semi-continuous at all points $t\in T$ for which
		$\Theta(t)$ is
		nonempty and bounded.
	\end{lemma}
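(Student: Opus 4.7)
The plan is to reduce the claim to an application of \Cref{lm:local}, which states that a set-valued mapping with closed graph is upper semi-continuous at any point where it is locally bounded. Thus it suffices to verify, at an arbitrary $t_0 \in T$ with $\Theta(t_0)$ nonempty and bounded, that (i) $\Theta$ has closed graph on $T$, and (ii) $\Theta$ is locally bounded at $t_0$.

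Step (i) is routine and mirrors \Cref{lm:graph1}. Given a sequence $(t_k, x_k)$ with $x_k \in \Theta(t_k)$ converging to $(\bar t, \bar x)$, closedness of $K$ gives $\bar x \in K$, and for any $y \in K$ the inequality $g(x_k, t_k) \leq g(y, t_k)$ passes to the limit by continuity of $g$ on $K \times T$, so that $\bar x \in \Theta(\bar t)$. Note that quasi-convexity is not needed here.

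Step (ii) is the heart of the argument and is where quasi-convexity enters. I would argue by contradiction: assume there are sequences $t_k \to t_0$ and $x_k \in \Theta(t_k)$ with $\|x_k\| \to \infty$. Fix any $\bar x \in \Theta(t_0)$ and set $d_k = (x_k - \bar x)/\|x_k - \bar x\|$; after passing to a subsequence, $d_k \to \bar d$ with $\|\bar d\| = 1$. For every fixed $\lambda > 0$, once $\|x_k - \bar x\| \geq \lambda$, the point
\[
y_k(\lambda) = \bar x + \lambda d_k = \Bigl(1 - \tfrac{\lambda}{\|x_k - \bar x\|}\Bigr)\bar x + \tfrac{\lambda}{\|x_k - \bar x\|}\, x_k
\]
is a convex combination of $\bar x$ and $x_k$, hence lies in $K$, and quasi-convexity of $g(\cdot, t_k)$ combined with optimality of $x_k$ yields
\[
g(y_k(\lambda), t_k) \;\leq\; \max\{g(\bar x, t_k),\, g(x_k, t_k)\} \;=\; g(\bar x, t_k).
\]
Letting $k \to \infty$, continuity of $g$ gives $\bar x + \lambda \bar d \in K$ with $g(\bar x + \lambda \bar d, t_0) \leq g(\bar x, t_0)$, so $\bar x + \lambda \bar d \in \Theta(t_0)$. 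Since $\lambda > 0$ is arbitrary, $\Theta(t_0)$ would contain the whole ray $\{\bar x + \lambda \bar d : \lambda \geq 0\}$, contradicting its boundedness.

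The main obstacle is step (ii): one must extract a \emph{single} direction subsequence $d_k \to \bar d$ that works uniformly over all radii $\lambda$, which is what turns scattered convex combinations into an actual ray in $\Theta(t_0)$. This is handled cleanly by fixing the direction first and only then varying $\lambda$, as above. A further subtlety is that the convex combination $y_k(\lambda)$ is required to lie in $K$, so the argument tacitly uses convexity of $K$; this is the standard standing hypothesis accompanying quasi-convex objectives and should be made explicit when invoking the result.
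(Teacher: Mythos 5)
Your proof is correct, but it follows a genuinely different route from the paper: the paper does not prove \Cref{lm:usc_sol} at all, it simply cites \cite[Theorem~4.3.3]{bank1983non}, whereas you give a self-contained argument that reduces the claim to \Cref{lm:local} by checking closedness of the graph (continuity of $g$ plus closedness of $K$, exactly as in \Cref{lm:graph1}) and local boundedness of $\Theta$ at $t_0$ via a normalized-direction/ray argument: if minimizers $x_k\in\Theta(t_k)$ escaped to infinity, quasi-convexity and optimality would force the whole ray $\bar x+\lambda\bar d$, $\lambda\ge 0$, into $\Theta(t_0)$, contradicting its boundedness. All steps check out: the identity $\max\{g(\bar x,t_k),g(x_k,t_k)\}=g(\bar x,t_k)$ uses optimality of $x_k$, the limit point $\bar x+\lambda\bar d$ lies in $K$ by closedness of $K$, and fixing the subsequence $d_k\to\bar d$ before varying $\lambda$ indeed yields a single ray. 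Your proof also delivers slightly more than the statement (local boundedness at $t_0$, in the spirit of the asymptotic-cone arguments behind \Cref{lm:local}), and it makes the division of labour between the hypotheses transparent. The one caveat you raise, convexity of $K$, is real but harmless in context: the paper's definition of quasi-convexity of $g$ on $K$ already presupposes that segments between points of $K$ stay in $K$, and in all applications ($\Theta_1$, $\Theta_2$ in \Cref{sec:convex}) the feasible set is convex; still, stating it explicitly, as you do, is the right call.
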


	Considering the above parametric problem under the condition of strict quasi-convexity, we have
	the following lemma which will be used in the proof of \Cref{cor:semistrict}.

	\begin{lemma}\label{lm:strictly}
		Assume that, for any parameter $t\in T$, $g(\cdot,t)$ is strictly quasi-convex on $K$.
		Then, the solution map $\Theta$ is
		single-valued and continuous on its domain which is open in $T$.
	\end{lemma}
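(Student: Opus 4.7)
The plan is to establish single-valuedness, continuity, and openness of the domain in turn.

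First, single-valuedness on $\dom\Theta$ is immediate from the remark recalled just before the lemma: a strictly quasi-convex function attains its minimum over a convex set at no more than one point, so $\Sol(K,g(\cdot,t))$ is a singleton whenever it is non-empty.

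Second, for continuity, I would invoke \Cref{lm:usc_sol}: strict quasi-convexity implies quasi-convexity, and since $\Theta(t)$ is a singleton (hence bounded) at every $t\in\dom\Theta$, that lemma yields upper semi-continuity of $\Theta$ on $\dom\Theta$. For a single-valued mapping, upper semi-continuity coincides with ordinary continuity.

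Third, and most delicate, is the openness of $\dom\Theta$ in $T$. Fix $\bar t\in\dom\Theta$ and let $\bar x$ be the unique point of $\Theta(\bar t)$. Consider the compact set $K_1\coloneqq K\cap\bar B_1(\bar x)$ and the auxiliary argmin mapping $M(t)\coloneqq\argmin_{x\in K_1} g(x,t)$, which is non-empty and compact-valued for every $t\in T$ by continuity of $g$ and compactness of $K_1$. A direct check shows that $M$ has closed graph, and since $M(T)\subseteq K_1$ it is locally bounded, so it is upper semi-continuous by \Cref{lm:local}. Because $\bar x$ is the unique global minimizer of $g(\cdot,\bar t)$ on $K\supseteq K_1$ and lies in $K_1$, we have $M(\bar t)=\{\bar x\}$. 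Upper semi-continuity of $M$ then yields a neighbourhood $U$ of $\bar t$ in $T$ with $M(t)\subseteq B_{1/2}(\bar x)$ for all $t\in U$. For such $t$, any $x_t\in M(t)$ lies in the $K$-relative interior of $K_1$ and is therefore a local minimizer of $g(\cdot,t)$ on $K$. The classical fact that every local minimizer of a strictly quasi-convex function on a convex set is the global minimizer (verified by moving a short distance along the segment towards any hypothetically better point and invoking the strict inequality in the definition of strict quasi-convexity) then gives $x_t\in\Theta(t)$, so $U\subseteq\dom\Theta$.

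The main obstacle is this last step: one must bootstrap from a \emph{constrained} minimizer on the compact ball $K_1$ to an \emph{unconstrained} minimizer on $K$, and the bridge is precisely the ``local min equals global min'' property provided by strict quasi-convexity.
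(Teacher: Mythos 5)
Your proof is correct, but it takes a genuinely different route from the paper's. The paper handles the lemma in a few lines: after noting single-valuedness, it observes that $\Theta(t)$ is the unique (hence isolated) local solution of $\OP(K,g(\cdot,t))$ and invokes the stability theorem of Klatte \cite[Theorem 1]{klatte1985stability}, which in one stroke provides a bounded open set $V\supseteq\Theta(t)$ and a neighbourhood $U$ of $t$ on which $\Theta$ stays nonempty and contained in $V$; openness of the domain and continuity both fall out of that single external citation. You instead keep everything internal to the paper's toolkit: continuity on the domain via \Cref{lm:usc_sol} (legitimate, since with the definition obtained from \eqref{eq:qcx} strict quasi-convexity implies quasi-convexity, and singleton values are bounded), and openness via the truncated argmin mapping $M(t)=\argmin_{x\in K\cap\bar B_1(\bar x)}g(x,t)$, which is nonempty-valued by Weierstrass, has closed graph by the same argument as in \Cref{lm:graph1}, is bounded, hence upper semi-continuous by \Cref{lm:local}; the identification $M(\bar t)=\{\bar x\}$ is sound (any minimizer over the truncation attains the global minimum value because $\bar x$ lies in it, so uniqueness applies), points of $M(t)\subseteq B_{1/2}(\bar x)$ are genuinely local minimizers on $K$ since $B_{1/2}(x_t)\cap K\subseteq K\cap\bar B_1(\bar x)$, and the local-implies-global step is exactly where the strict inequality in the definition of strict quasi-convexity is needed (plain quasi-convexity would only give a non-strict inequality and the argument would fail). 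What your route buys is self-containedness — no appeal to Klatte's result on isolated local minimizers — at the cost of length; note also that, like the paper's own setting, your segment arguments silently use convexity of $K$, which is implicit in the paper's definition of (strict) quasi-convexity on $K$.
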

	\begin{proof}
		The single-valuedness of $\Theta$ is implied by the strict
		quasi-convexity of $g(\cdot,t)$ on $K$. Hence, for arbitrary $t\in \dom \Theta$,
		$\Theta(t)$ is the unique isolated local solution of $\OP(K,g(\cdot,t))$. Applying
		\cite[Theorem
		1]{klatte1985stability} to the considered parametric optimization problem, we
		conclude that there exists a bounded and open set $V\supseteq \Theta(t)$ and a
		neighborhood $U$ of $t$ such that, for all $t'\in U$, $\Theta(t')$ is nonempty and
		contained in $V$. Hence, the domain of $\Theta$ is open and $\Theta$  is
		continuous on
		this set.
		\qed
	\end{proof}

	The above lemmas will be used in the proofs of \Cref{thm:semistrict}, \Cref{thm:VOPquasiconvex}, and \Cref{thm:VOPconvex} where the parameter space $T$ is $\Delta$ and the solution mapping  $\Theta$ is $\Theta_1$ or $\Theta_2$. 

	\subsection{Problems governed by quasi-convex functions}


	To ensure unboundedness
	of all components of
	the weak Pareto solution set of a quasi-convex vector optimization problem,
	the authors in  \cite[Theorem~3.2]{hoa2007unbounded} used the condition that
	the number of components
	of the solution set is finite. In the following theorem, we propose two other conditions
	concerning the solution mapping $\Theta_2$ given in \eqref{eq:theta2} to
	replace it. Let
	\begin{equation}\label{eq:O}
		\mathcal{O}_2=\{\xi \in \dom \Theta_2: \Theta_2(\xi) \text{ is bounded}\}.
	\end{equation}

	\begin{theorem}\label{thm:semistrict}
		Let $f_1,\dots,f_m$ be quasi-convex and bounded from below by $0$ on $K$. Assume that one of the following conditions holds:
		\begin{description}
			\item[\rm(a)] $\dom \Theta_2$ is connected;
			\item[\rm(b)] $\mathcal{O}_2$ is open in $\Delta$.
		\end{description}
		If $\Solw(K,f)$ is
		disconnected, then all its components are
		unbounded. Conse\-quently, if the set is bounded then it is connected.
	\end{theorem}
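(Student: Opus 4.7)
The plan is to reduce the theorem to the abstract results in \Cref{sec:con} by choosing the set-valued mapping $\Psi = \Theta_2$ with $A = \Delta$, and then to verify one by one the hypotheses of \Cref{thm:component} (under (a)) or of \Cref{thm:com2} (under (b)). The first step will be to apply \Cref{lm:Theta2}: since $f_1,\dots,f_m$ are bounded from below by $0$ on $K$, we have
\[
\Solw(K,f)=\Theta_2(\Delta),
\]
so disconnectedness/unboundedness statements about $\Solw(K,f)$ translate directly into statements about the image $\Theta_2(\Delta)$. Observe also that $\Delta$ is convex, hence connected.

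Next I would check that $\Theta_2$ meets the standing hypotheses imposed on $\Psi$ in \Cref{sec:con}: closed graph and connected values. Closedness of $\graph \Theta_2$ is already recorded in \Cref{lm:graph2}. For connected-valuedness, fix $\xi\in\dom\Theta_2$. Each map $\xi_i f_i$ is quasi-convex on $K$ (positive, or zero, scaling of a quasi-convex function is quasi-convex), and the pointwise maximum of finitely many quasi-convex functions is again quasi-convex, because
\[
[f^{\xi}\le\lambda]=\bigcap_{i=1}^{m}[\xi_i f_i\le\lambda]
\]
is an intersection of convex level sets. Therefore $\Theta_2(\xi)=\Sol(K,f^{\xi})$ is convex (as an intersection of level sets of $f^{\xi}$ with the convex feasible set $K$), hence connected.

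The remaining hypothesis is upper semi-continuity of $\Theta_2$ on the set
$\mathcal{O}_2=\{\xi\in\dom\Theta_2 : \Theta_2(\xi)\text{ is bounded}\}$. Here I would invoke \Cref{lm:usc_sol} with the parameter space $T=\Delta$ and $g(x,\xi)=f^{\xi}(x)=\max_i \xi_i f_i(x)$, which is continuous on $K\times\Delta$ (since each $f_i$ is continuous and the maximum of continuous functions is continuous) and, as verified above, quasi-convex in $x$ for every fixed $\xi$. \Cref{lm:usc_sol} then gives upper semi-continuity of $\Theta_2$ at every point where $\Theta_2(\xi)$ is nonempty and bounded, that is, at every point of $\mathcal{O}_2$.

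With all hypotheses in place, I would finish by invoking the two structural results directly: under assumption (a), $\dom\Theta_2$ is connected, so \Cref{thm:component} applied to $\Psi=\Theta_2$ yields that every connected component of $\Theta_2(\Delta)=\Solw(K,f)$ is unbounded when the image is disconnected; under assumption (b), $A=\Delta$ is connected and $\mathcal{O}_2$ is open in $\Delta$, so \Cref{thm:com2} gives the same conclusion. The consequence is immediate by contraposition: a bounded set cannot have an unbounded component, so if $\Solw(K,f)$ is bounded it must be connected. The only delicate point I foresee is the connected-valuedness step, where one must be careful that the maximum preserves quasi-convexity and that $K$ is convex so that convex level-sets lead to a convex (hence connected) solution set; once this is in hand, the rest is a clean application of the abstract machinery of \Cref{sec:con}.
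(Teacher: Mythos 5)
Your proposal is correct and follows essentially the same route as the paper: reduce to $\Theta_2$ via \Cref{lm:Theta2}, note closed graph (\Cref{lm:graph2}) and convex (hence connected) values from quasi-convexity of $f^{\xi}$, get upper semi-continuity on $\mathcal{O}_2$ from \Cref{lm:usc_sol}, and conclude with \Cref{thm:component} under (a) and \Cref{thm:com2} under (b). The only difference is cosmetic: you verify quasi-convexity of $f^{\xi}$ by the explicit level-set intersection argument, whereas the paper cites a known result for the maximum of quasi-convex functions.
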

	\begin{proof}
		Since the objective functions are bounded from below by $0$ on $K$, thanks to \Cref{lm:Theta2}, $\Solw(K,f)=\Theta_2(\Delta)$. Moreover, $f_1,\dots,f_m$ are quasi-convex, from \cite[Theorem 1.31]{ansari2013generalized}, $f^{\xi}$ given in \eqref{eq:fxi2} is also quasi-convex on $K$, for $\xi\in\Delta$, and, then $\Theta_2$ has convex values.

		According to \Cref{lm:usc_sol}, $\Theta_2$ is upper semi-continuous on
		$\mathcal{O}_2$.
		Therefore, the desired conclusion is obtained by applying directly
		Theorems  \ref{thm:component} and \ref{thm:com2} for $\Theta_2$ with the note that $\Theta_2$ has
		closed graph and connected values. \qed
	\end{proof}

	If the solution mapping $\Theta_2$ is single-valued on its domain, we have the following result:

	\begin{corollary}\label{cor:semistrict}
		Let all $f_i$ be quasi-convex and strictly bounded from below by $0$ on $K$.
		Suppose that $\Theta_2$ is single-valued on its domain. If $\Sol(K,f)$ is
		disconnected, then all its components are unbounded.
	\end{corollary}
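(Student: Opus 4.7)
The plan is to reduce the corollary to \Cref{thm:semistrict} by first identifying $\Sol(K,f)$ with $\Solw(K,f)$ and then verifying its alternative hypothesis (b). Since the $f_i$ are strictly bounded from below by $0$ on $K$ and $\Theta_2$ is single-valued on its domain, \Cref{rmk:Theta2_Solw} (citing \cite[Theorem~3.5]{warburton1983quasiconcave}) gives $\Sol(K,f) = \Solw(K,f)$. Consequently, the connected components of $\Sol(K,f)$ coincide with those of $\Solw(K,f)$, and disconnectedness of one is equivalent to that of the other, so it suffices to establish the claim for $\Solw(K,f)$.

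To apply \Cref{thm:semistrict}(b) I would show that $\mathcal{O}_2$ is open in $\Delta$. Single-valuedness of $\Theta_2$ forces every non-empty value to be a singleton and hence bounded, so $\mathcal{O}_2 = \dom \Theta_2$. I would then import the Klatte-type stability argument used in the proof of \Cref{lm:strictly}: for every $\xi \in \dom \Theta_2$ the quasi-convex function $f^\xi$ has a unique minimizer $\Theta_2(\xi)$, and uniqueness automatically makes this a strict global minimizer, which is the ingredient \cite[Theorem~1]{klatte1985stability} requires in order to produce a neighborhood of $\xi$ in $\Delta$ on which the perturbed problems remain solvable. Combined with quasi-convexity and the lower bound already in the hypotheses of \Cref{thm:semistrict}, this verifies condition (b); the theorem then yields unboundedness of all components of $\Solw(K,f)$, which transfers to $\Sol(K,f)$ by the first step.

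The main obstacle will be justifying openness of $\dom \Theta_2$ cleanly. \Cref{lm:strictly} is stated under strict quasi-convexity, a property strictly stronger than quasi-convexity plus uniqueness of the minimizer---for instance, $\max(x_1^2,x_2^2)$ on $\R^2$ is quasi-convex with the unique minimizer $0$ but is not strictly quasi-convex. Thus \Cref{lm:strictly} cannot be invoked verbatim; instead one must inspect its proof and observe that the Klatte stability step relies only on the single-valuedness together with strict global optimality, both of which are available here, or alternatively reformulate the stability argument directly from the uniqueness hypothesis.
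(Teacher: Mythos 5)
Your overall route is the one the paper takes: identify $\Sol(K,f)$ with $\Solw(K,f)$ via \cite[Theorem~3.5]{warburton1983quasiconcave} (as recalled in \Cref{rmk:Theta2_Solw}), note that single-valuedness gives $\mathcal{O}_2=\dom\Theta_2$, obtain openness of $\dom\Theta_2$ in $\Delta$ from the Klatte-type stability argument behind \Cref{lm:strictly}, and then invoke condition (b) of \Cref{thm:semistrict}. You are also right that \Cref{lm:strictly} cannot be quoted verbatim: the corollary assumes only quasi-convexity plus single-valuedness, the scalarizations $f^{\xi}$ need not be strictly quasi-convex (your example $\max(x_1^2,x_2^2)$ is apt), and the paper's own proof does cite \Cref{lm:strictly} at this point without comment, so your criticism identifies a genuine soft spot rather than a misreading.

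The weak point of your repair is exactly at the obstacle you name. In the proof of \Cref{lm:strictly}, strict quasi-convexity is used for more than single-valuedness: it makes every local minimizer of $g(\cdot,t')$ a global one, and that is what converts the output of the stability theorem --- minimizers of the problem localized to a bounded open set $V\supseteq\Theta(t)$, for $t'$ near $t$ --- into genuine elements of the global solution set $\Theta(t')$, i.e.\ into nonemptiness of $\Theta_2(\xi')$ for $\xi'$ near $\xi$. Observing that uniqueness makes $\Theta_2(\xi)$ a strict global minimizer only certifies that $\{\Theta_2(\xi)\}$ is a legitimate compact (complete) local minimizing set for the localized problem; it does not by itself show that the perturbed localized minimizers minimize $f^{\xi'}$ over all of $K$. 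This local-to-global step can still be closed under plain quasi-convexity, but it needs an explicit argument, for instance: for $\xi'$ near $\xi$ the minimizers of $f^{\xi'}$ over $K\cap\cl(V)$ are nonempty and lie in the open set $V$; if such a minimizer $z$ were beaten by some $y\in K\setminus\cl(V)$, then quasi-convexity of $f^{\xi'}$ along the segment $[z,y]\subseteq K$ (convexity of $K$ is implicit in the quasi-convex setting) would force every point of that segment lying in $K\cap\cl(V)$ to be a localized minimizer as well, in particular the exit point of the segment on the boundary of $V$, contradicting that all localized minimizers lie in $V$. With this (or an equivalent CLM-set argument) supplied, your plan does give openness of $\dom\Theta_2=\mathcal{O}_2$ in $\Delta$, and the remainder of your reduction to \Cref{thm:semistrict} coincides with the paper's proof.
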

	\begin{proof}
		From the assumption, the domain of $\Theta_2$ is $\mathcal{O}_2$, where
		$\mathcal{O}_2$ is defined by \eqref{eq:O}. Hence, $\mathcal{O}_2$ is the set of $\xi$ in $\Delta$ such that
		$\Theta_2(\xi)$ has only one element.
		Thanks to \cite[Theorem 3.5]{warburton1983quasiconcave}, we claim that
		$\Sol(K,f)$ coincides with $\Solw(K,f)$.

		By applying \Cref{lm:strictly} for the problem $\VOP(K,f)$, we conclude that
		$\dom\Theta_2$ is open in $\Delta$. As $\VOP(K,f)$ satisfies
		condition $\rm (b)$ in \Cref{thm:semistrict}, by applying that theorem for
		$\Solw(K,f)$, we obtain the desired conclusion.
		\qed
	\end{proof}

	\begin{example}
		We consider the problem given in \cite[Example~5.1]{warburton1983quasiconcave} whose constraint set is  $$K=\{x\in\R^2:2\leq x_1
		, 0\leq x_2\leq 4\}$$ and objective  functions are
		$$f_1(x)=\frac{x_1}{x_1+x_2-1}, \ \ f_2(x)=\frac{x_1}{x_1-x_2+3}.$$
		Both $f_1,f_2$ are strictly bounded from below by $0$ on $K$.
		This is a
		quasi-convex problem and its solution mapping $\Theta_2$ is given
		by
		$$\Theta_2(\xi_1)=\left\{\begin{array}{cl}
			\emptyset  & \ \ \hbox{ if } \ 0 \leq \xi_1
			<\frac{1}{2} \hbox{ or } \ \frac{5}{6} < \xi_1
			\leq 1, \smallskip \\
			\left\lbrace \Big(\frac{1+2\xi_1}{1-2\xi_1},0\Big)\right\rbrace  & \ \ \hbox{ if } \qquad \quad \ \frac{1}{6}
			\leq
			\xi_1
			<\frac{1}{2}, \smallskip \\
			\left\lbrace \Big(\frac{3-2\xi_1}{2\xi_1-1},4\Big)\right\rbrace  & \ \ \hbox{ if } \qquad \quad \ \frac{1}{2} <
			\xi_1 \leq \frac{5}{6}.
		\end{array}\right.$$
		Clearly, the domain of $\Theta_2$ is the union of two open intervals
		$\left(\frac{1}{6},\frac{1}{2} \right)\cup \left(\frac{1}{2},\frac{5}{6}
		\right)$; it is disconnected.
		The mapping is single-valued on its domain. According to
		\Cref{cor:semistrict}, the Pareto solution set and weak Pareto
		solution set coincide. They are the union of two disconnected parallel
		rays:
		$$\{(x_1,x_2)\in\R^2:x_1\geq 2, x_2=0\} \cup \{(x_1,x_2)\in\R^2:x_1\geq 2, x_2=4\}.$$
	\end{example}

	In the case that all objective functions are strictly quasi-convex, $\Theta_2$ is single-valued on its domain. Therefore, we have the following result:

	\begin{corollary}
		Suppose that all $f_i$ are strictly quasi-convex and strictly bounded from below by $0$ on
		$K$. Then, one has $\Sol(K,f)=\Solw(K,f)$. Moreover,  if $\Sol(K,f)$ is
		disconnected, then all its components are unbounded.
	\end{corollary}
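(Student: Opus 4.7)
The plan is to reduce the claim to \Cref{cor:semistrict} by verifying that its hypotheses are met under the stronger assumption of strict quasi-convexity. Strict quasi-convexity of each $f_i$ trivially implies quasi-convexity, and the strict lower bound by $0$ is part of the hypothesis; the only non-trivial thing to check is that the solution mapping $\Theta_2$ from \eqref{eq:theta2} is single-valued on its domain. Once this is established, both conclusions---the equality $\Sol(K,f)=\Solw(K,f)$ and the unboundedness of every component of $\Sol(K,f)$ in case of disconnectedness---follow immediately from \Cref{cor:semistrict}.

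To prove single-valuedness, I would fix $\xi\in\dom\Theta_2$ and argue by contradiction, assuming that distinct points $x,y\in\Theta_2(\xi)=\Sol(K,f^{\xi})$ exist. Since $K$ is convex (a standing assumption in the quasi-convex framework, ensuring that $\tfrac12(x+y)\in K$), the midpoint $z\coloneqq\tfrac12(x+y)$ is feasible. Strict quasi-convexity of each $f_i$ gives $f_i(z)<\max\{f_i(x),f_i(y)\}$. Multiplying by $\xi_i\geq 0$, for indices with $\xi_i>0$ one obtains $\xi_if_i(z)<\max\{\xi_if_i(x),\xi_if_i(y)\}\leq f^{\xi}(x)=f^{\xi}(y)$. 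For indices with $\xi_i=0$, one has $\xi_if_i(z)=0$, and the key observation is that $f^{\xi}(x)>0$: since $\sum_j\xi_j=1$ there is some $j$ with $\xi_j>0$, and strict positivity of $f_j$ then yields $f^{\xi}(x)\geq\xi_jf_j(x)>0$. Hence in all cases
\[
\xi_if_i(z)<f^{\xi}(x)=\min_{K}f^{\xi}, \quad i\in\{1,\dots,m\}.
\]
Because the maximum over finitely many strict inequalities remains strict, $f^{\xi}(z)=\max_i\xi_if_i(z)<\min_{K}f^{\xi}$, contradicting the optimality of $x$. Therefore $\Theta_2$ is single-valued on $\dom\Theta_2$.

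The main---indeed essentially only---technical subtlety is the correct handling of the boundary indices with $\xi_i=0$, where the plain strict quasi-convexity argument does not immediately deliver a strict inequality; this is precisely where the assumption of \emph{strict} positivity of the $f_i$ (rather than mere nonnegativity) enters, ensuring $f^{\xi}(x)>0$. Beyond this observation, the proof is a direct application of \Cref{cor:semistrict}.
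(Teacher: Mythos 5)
Your proposal is correct and follows essentially the same route as the paper: both reduce the corollary to \Cref{cor:semistrict} after checking that $\Theta_2$ is single-valued on its domain. The only difference is that the paper obtains single-valuedness by citing Warburton's result that $f^{\xi}$ is strictly quasi-convex (\cite[Lemma 4.2]{warburton1983quasiconcave}), whereas you prove it directly with the midpoint argument, correctly isolating the role of strict positivity of the $f_i$ in handling the indices with $\xi_i=0$ --- in effect a self-contained proof of the cited lemma's consequence.
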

	\begin{proof}
		For $\xi\in\Delta$, $f^{\xi}$ is also strictly quasi-convex on $K$ (see \cite[Lemma 4.2]{warburton1983quasiconcave});
		Hence $\Theta_2$ is single-valued on its domain. The conclusions are
		implied straight\-forwardly by \Cref{cor:semistrict}. \qed
	\end{proof}

	In the following theorem, we provide a necessary and sufficient condition for unboundedness of the weak Pareto solution set of $\VOP(K,f)$ under certain conditions:

	\begin{theorem}\label{thm:VOPquasiconvex}
		Let $f_1,\dots,f_m$ be quasi-convex and bounded from below by $0$ on $K$. Assume that $\Theta_2$ has bounded values.
		Then, the weak Pareto solution set is unbounded if and only if $\dom \Theta_2$ is non-closed in $\Delta$.
	\end{theorem}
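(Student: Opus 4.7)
The plan is to combine ingredients already developed in the paper. By \Cref{lm:Theta2}, the assumption $f_i\geq 0$ on $K$ gives the identity $\Solw(K,f)=\Theta_2(\Delta)$, and by \Cref{lm:graph2} the mapping $\Theta_2$ has closed graph.

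For the \emph{if} direction, suppose $\dom\Theta_2$ is non-closed in $\Delta$. This is precisely the content of \Cref{rmk:nonclosed_Theta2} (itself a direct application of \Cref{thm:main_sa} to $\Phi=\Theta_2$), which yields unboundedness of $\Solw(K,f)=\Theta_2(\Delta)$.

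For the \emph{only if} direction, I would argue by contrapositive. Suppose $\dom\Theta_2$ is closed in the compact simplex $\Delta$; then $\dom\Theta_2$ is itself compact. Since each $f_i$ is quasi-convex on $K$, the scalarization $f^\xi$ defined in \eqref{eq:fxi2} is quasi-convex on $K$ for every $\xi\in\Delta$ (cf.\ the quasi-convexity step in the proof of \Cref{thm:semistrict}, since the max of nonnegative combinations of quasi-convex functions is quasi-convex). The assumption that $\Theta_2$ has bounded values means $\Theta_2(\xi)$ is nonempty and bounded for every $\xi\in\dom\Theta_2$, so \Cref{lm:usc_sol} applies and gives upper semi-continuity of $\Theta_2$ at every point of $\dom\Theta_2$. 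I would then close the argument with a standard compactness-plus-USC covering: for each $\xi\in\dom\Theta_2$ there exist an open neighborhood $U_\xi\subseteq\Delta$ of $\xi$ and a bounded open set $V_\xi\subseteq\R^n$ with $\Theta_2(U_\xi\cap\dom\Theta_2)\subseteq V_\xi$; extracting a finite subcover $U_{\xi_1},\dots,U_{\xi_N}$ of the compact set $\dom\Theta_2$ yields $\Theta_2(\Delta)=\Theta_2(\dom\Theta_2)\subseteq V_{\xi_1}\cup\cdots\cup V_{\xi_N}$, which is bounded, so $\Solw(K,f)$ is bounded.

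The main obstacle is the \emph{only if} direction: one must translate the pointwise bounded-values hypothesis into a globally bounded image. This is handled by combining upper semi-continuity (coming from quasi-convexity of $f^\xi$ via \Cref{lm:usc_sol}) with compactness of $\dom\Theta_2$. All the remaining ingredients---the identity $\Solw(K,f)=\Theta_2(\Delta)$, closed graph of $\Theta_2$, the non-closed-domain criterion \Cref{thm:main_sa}, and USC under quasi-convexity---are already in place.
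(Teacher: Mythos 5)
Your proposal is correct and follows essentially the same route as the paper: the identity $\Solw(K,f)=\Theta_2(\Delta)$ from \Cref{lm:Theta2}, \Cref{rmk:nonclosed_Theta2} (i.e.\ \Cref{thm:main_sa}) for the \emph{if} direction, and upper semi-continuity of $\Theta_2$ via \Cref{lm:usc_sol} for the \emph{only if} direction. The only cosmetic difference is that the paper argues by contradiction, extracting a convergent subsequence of parameters from an unbounded sequence of weak Pareto solutions and applying upper semi-continuity at the single limit point, whereas you prove the contrapositive globally by a finite-subcover argument over the compact set $\dom\Theta_2$; both hinge on exactly the same lemma and assumptions.
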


	\begin{proof} According to \Cref{lm:Theta2}, one has $\Solw(K,f)=\Theta_2(\Delta)$. Clearly, if $\dom \Theta_2$ is non-closed in $\Delta$, thanks to \Cref{rmk:nonclosed_Theta2},
		the weak Pareto solution set is unbounded.

		We now prove the inverse conclusion. Assume that $\Solw(K,f)$ is unbounded, but on the contrary, $\dom \Theta_2$ is closed in $\Delta$.
		Let $\{x^k\}$ be an unbounded sequence in $\Solw(K,f)$. Because of $\Solw(K,f)=\Theta_2(\Delta)$, there exist $\{\xi^k\} \subset \dom \Theta_2  \subset \Delta$ such that $x^k\in\Sol(K,f_{\xi^k})$, for each $k$. By boundedness and closedness of $\dom \Theta_2$, we assume that $\xi^k \to \bar \xi \in \dom \Theta_2$, i.e., $\Theta_2(\bar \xi)\neq \emptyset$. Since $\Theta_2$ has bounded values, $\Theta_2(\bar \xi)$ is bounded.

		Let $O$ be an open and bounded ball containing $\Theta_2(\bar \xi)$. Thanks to \Cref{lm:usc_sol}, $\Theta_2$ is upper semi-continuous at $\bar \xi$, hence there exists an open neighbourhood $\Delta_O$ of $\bar \xi$ such that $\Theta_2(\xi)\subset O$ for all $\xi\in \Delta_O$. This implies that $x^k\in O$ for all $k$ large enough. This is impossible since $\{x^k\}$ is unbounded. \qed
	\end{proof}


	\subsection{Problems governed by convex functions}

	For problems given by convex data,  we also have a necessary and sufficient condition for unboundedness of the weak Pareto solution set of $\VOP(K,f)$ as follows:

	\begin{theorem}\label{thm:VOPconvex} Let $K$ be convex and $f_1,\dots,f_m$ be convex over $K$. Assume that $\Theta_1$ has bounded values. Then, the weak Pareto solution set is unbounded if and only if $\dom \Phi_1$ is non-closed in $\Delta$.
	\end{theorem}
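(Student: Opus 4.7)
The plan is to mirror the structure of the proof of \Cref{thm:VOPquasiconvex}, replacing the role of $\Theta_2$ by $\Theta_1$ and using convexity in place of quasi-convexity at the appropriate places. The forward implication (non-closed domain implies unbounded weak Pareto solution set) follows immediately from \Cref{thm:VOP_nonconvex}, which does not rely on any generalized convexity assumption and also does not use the standing boundedness-of-values hypothesis; so this direction is obtained verbatim.

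For the converse, I would first invoke \Cref{lm:scalar}: since $K$ is convex and each $f_i$ is convex, one has the equality $\Theta_1(\Delta) = \Solw(K,f)$, which replaces the role played by \Cref{lm:Theta2} in the proof of \Cref{thm:VOPquasiconvex}. I then argue by contradiction. Suppose $\Solw(K,f)$ is unbounded but $\dom \Theta_1$ is closed in $\Delta$. Pick an unbounded sequence $\{x^k\} \subseteq \Theta_1(\Delta)$ and, for each $k$, choose $\xi^k \in \dom \Theta_1$ with $x^k \in \Theta_1(\xi^k)$. Since $\dom \Theta_1$ is a closed subset of the compact simplex $\Delta$, it is itself compact, so, after passing to a subsequence, $\xi^k \to \bar\xi \in \dom \Theta_1$. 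In particular $\Theta_1(\bar\xi)$ is nonempty, and by the standing assumption it is also bounded.

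The key step is to upgrade this pointwise boundedness at $\bar\xi$ to upper semi-continuity of $\Theta_1$ at $\bar\xi$. Since every $f_i$ is convex in $x$, the scalarization $f_\xi = \xi_1 f_1 + \dots + \xi_m f_m$ is convex in $x$ for each fixed $\xi \in \Delta$, hence in particular quasi-convex. Thus \Cref{lm:usc_sol}, applied with $T = \Delta$ and $g(x,\xi) = f_\xi(x)$, yields upper semi-continuity of $\Theta_1$ at every point of its domain where $\Theta_1$ is bounded, and $\bar\xi$ is such a point. Enclosing $\Theta_1(\bar\xi)$ in an open bounded ball $O$, upper semi-continuity then provides a neighbourhood $\Delta_O$ of $\bar\xi$ in $\Delta$ with $\Theta_1(\xi) \subseteq O$ for all $\xi \in \Delta_O$. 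Consequently $x^k \in O$ for all sufficiently large $k$, contradicting the unboundedness of $\{x^k\}$.

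I do not expect a substantial obstacle: the only point requiring care is verifying that \Cref{lm:usc_sol} is applicable, i.e.\ that convexity of the $f_i$ in $x$ (uniformly in the parameter $\xi$) transfers to quasi-convexity of $f_\xi$ in $x$ for every $\xi\in\Delta$, and that the joint continuity hypothesis of \Cref{lm:usc_sol} holds; both are immediate from continuity and convexity of the $f_i$. Everything else is a routine transcription of the argument from \Cref{thm:VOPquasiconvex}.
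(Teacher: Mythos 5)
Your proposal is correct and takes essentially the same approach as the paper: the paper's proof simply notes that $f_{\xi}$ is convex over $K$, invokes \Cref{lm:scalar} to get $\Solw(K,f)=\Theta_1(\Delta)$, and then repeats the contradiction argument of \Cref{thm:VOPquasiconvex} (compactness of the closed domain in $\Delta$, bounded values, and upper semi-continuity from \Cref{lm:usc_sol}), which is exactly what you spell out, including the forward direction via \Cref{thm:VOP_nonconvex}.
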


	\begin{proof} We consider the scalarization $f_{\xi}=\xi_1 f_1+\dots+\xi_m f_m$, for $\xi\in\Delta$. Clearly, $f_{\xi}$ is also convex over $K$. Moreover, according to Lemma \ref{lm:scalar}, one has $\Solw(K,f)=\Phi_1(\Delta)$. The conclusion is proved by repeating the argument in the proof of Theorem \ref{thm:VOPquasiconvex}.  \qed
	\end{proof}

	\begin{remark}
		If $K$ is convex and all $f_i$ are convex over $K$, we have the same conclusions in Theorem \ref{thm:semistrict} for the solution map $\Theta_1$ defined by \eqref{eq:theta1}. However, until now, we still do not know any explicit convex vector optimization problems whose (weak) Pareto solution set is disconnected. This is related to Question 9.8 in \cite{yen2011monotone} and Question 5.1 in \cite{hieu2021disconnectedness}.
	\end{remark}

	When $\VOP(K,f)$ is given by convex polynomial data, i.e. $f_1,\dots,f_m$ are polynomials and $K$ is given by convex polynomials as follows:
	\[K=\{x\in\R^n: g_1(x)\leq 0, \dots, g_s(x)\leq 0 \},\]
	under a mild condition, in the following theorem, we claim that $\Solw(K,f)$ is connected.

	\begin{theorem}\label{thm:VOP_pol} Let $f_1,\dots,f_m$ and $g_1,\dots,g_s$ be polynomial and convex. Assume that $\Theta_1$ has bounded values. Then, the weak Pareto solution set is connected.
	\end{theorem}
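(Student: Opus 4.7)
The plan is to argue by contradiction that the finitely many connected components of $\Solw(K,f)$ would otherwise split $\dom\Theta_1$ into disjoint clopen pieces, which is incompatible with the connectedness of $\dom\Theta_1$. The argument follows the pattern of the proof of \Cref{thm:component}, but uses two inputs specific to the polynomial convex setting.

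First I would assemble the structural data. By \Cref{lm:scalar} (convex case) we have $\Solw(K,f)=\Theta_1(\Delta)$, and by \Cref{lm:graph1} $\Theta_1$ has closed graph. Each $f_\xi=\sum_i\xi_i f_i$ is a convex polynomial on the convex polynomial set $K$, so $\Theta_1(\xi)$ is convex, and hence $\Theta_1$ has connected values, which are bounded by hypothesis. \Cref{lm:usc_sol} (applied with $T=\Delta$ and $g(x,\xi)=f_\xi(x)$) then gives upper semi-continuity of $\Theta_1$ on $\dom\Theta_1$. The two polynomial-specific facts to record are, first, that $\dom\Theta_1$ is convex (and thus connected): by the Frank--Wolfe theorem for convex polynomial programs of Belousov--Klatte, $\dom\Theta_1=\{\xi\in\Delta:\inf_{x\in K}f_\xi(x)>-\infty\}$, and this is the effective domain of the concave function $\xi\mapsto\inf_{x\in K}f_\xi(x)$, concave as an infimum of affine functions of $\xi$; and second, that $\Solw(K,f)$, which is closed in $\R^n$ (from closedness of $\graph\Theta_1$ and compactness of $\Delta$) and semi-algebraic (by Tarski--Seidenberg applied to its first-order polynomial description), admits only finitely many connected components $\A_1,\dots,\A_r$, each closed in $\R^n$.

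With this in place, suppose for contradiction $r\geq 2$. By normality of $\R^n$, choose pairwise disjoint open sets $V_1,\dots,V_r$ with $\A_j\subseteq V_j$. For every $\xi\in\dom\Theta_1$ the set $\Theta_1(\xi)$ is nonempty, bounded and connected, hence contained in a unique $\A_{j(\xi)}$, giving the partition
\[\dom\Theta_1=\bigsqcup_{j=1}^r\Theta_1^{-1}(\A_j).\]
Upper semi-continuity at any $\xi\in\Theta_1^{-1}(\A_j)$, applied with the open set $V_j\supseteq\Theta_1(\xi)$, produces a neighbourhood of $\xi$ in $\dom\Theta_1$ on which $\Theta_1$ takes values in $V_j\cap\Solw(K,f)=V_j\cap\A_j\subseteq\A_j$; hence $\Theta_1^{-1}(\A_j)$ is open in $\dom\Theta_1$, and being complementary there to the union of the other open pieces it is also closed. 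This contradicts connectedness of $\dom\Theta_1$, so $r=1$.

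The main obstacle --- and the place where polynomiality is essential --- is securing both of the structural inputs used above. Without the Frank--Wolfe attainment theorem for convex polynomial programs, $\dom\Theta_1$ may be a proper subset of $\{\xi:\inf f_\xi>-\infty\}$ and fail to be convex; without semi-algebraicity, one loses the finite component count needed to separate the $\A_j$ by disjoint open sets in $\R^n$. Dropping either hypothesis returns us to the situation of \Cref{thm:component}, where one can only conclude that every component of a disconnected $\Solw(K,f)$ is unbounded, which is precisely the open question recalled in the preceding remark.
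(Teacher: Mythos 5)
Your proposal is correct, and its first half coincides with the paper's proof: you establish convexity (hence connectedness) of $\dom\Theta_1$ via the Belousov--Klatte Frank--Wolfe theorem combined with linearity of $\xi\mapsto f_\xi$ (the paper argues boundedness below along the segment $[\xi',\xi'']$ in exactly this way), you get connected values from convexity of $f_\xi$, upper semi-continuity on $\dom\Theta_1$ from \Cref{lm:usc_sol} together with the bounded-values hypothesis, and $\Solw(K,f)=\Theta_1(\Delta)$ from \Cref{lm:scalar}. Where you diverge is the finish: the paper simply invokes \Cref{thm:warb}, which says that a connected domain, connected values and upper semi-continuity already force $\Theta_1(\Delta)$ to be connected, whereas you reprove connectedness by contradiction, separating finitely many components by disjoint open sets and producing a clopen partition of $\dom\Theta_1$. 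Your detour through Tarski--Seidenberg to guarantee finitely many components is therefore superfluous: no finiteness is needed, since the clopen-partition mechanism you run is precisely the argument behind \Cref{thm:warb} and \Cref{thm:component} (there one only ever separates a single bounded component from one other component at a time). Your version does have the virtue of being self-contained, and the semi-algebraic finiteness claim and the closedness of $\Theta_1(\Delta)$ (projection of a closed graph along the compact factor $\Delta$) are both correct; just note that for the contradiction you should say explicitly that each $\Theta_1^{-1}(\A_j)$ is nonempty, which is immediate because every point of $\A_j\subseteq\Theta_1(\Delta)$ lies in some $\Theta_1(\xi)$, and that connected set then sits inside $\A_j$.
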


	\begin{proof} Recall that $f_{\xi}$ is also convex over $K$ and $\Solw(K,f)=\Phi_1(\Delta)$ (see Lemma \ref{lm:scalar}). Assume that $\Solw(K,f)$ is nonempty, this implies that $\dom \Phi_1$ is so. We prove that $\dom \Phi_1$ is convex. Indeed, let $\xi', \xi''$ be two points in $\dom \Phi_1$, i.e., $f_{\xi'}$ and $f_{\xi''}$ are bounded from below over $K$. This implies that $f_{\xi}$ is also bounded from below over $K$ for all $\xi$ in the line segment connecting $\xi'$ and $\xi''$, denoted by $[\xi',\xi'']$. According to the Frank–Wolfe type theorem in convex polynomial optimization \cite{belousov2002frank}, $\Sol(K,f_{\xi})$ is nonempty for all $\xi\in [\xi',\xi'']$. Hence, $\dom \Phi_1$ is convex.

		Therefore, connectedness of $\Solw(K,f)$ is straightforward from Lemma \ref{lm:usc_sol} and Theorem \ref{thm:warb}. \qed
	\end{proof}

	To end this section, we illustrate Theorems \ref{thm:VOPconvex} and \ref{thm:VOP_pol} by considering the following example:

	\begin{example}\label{ex:convex}
		Consider the convex polynomial  bicriteria  optimization problem $\VOP(K,f)$ in $\R^2$, with the convex
		feasible set $K=\R_+^2$ and two convex objective functions given by
		$$f_1(x)=x_1^2-x_2, \quad f_2(x)=-x_1+x_2^2.$$
		Clearly, these functions are not bounded from below on $K$.
		Thus, the two end points $(0,1),(1,0)$ in $\Delta$ do not belong to the domain of the
		solution mapping $\Theta_1$ of $\VOP(K,f)$.
		Let $\xi\in\ri \Delta$, equivalently $\xi_1\in (0,1)$. One has
		$$f_{\xi}=\xi_1 f_1+ (1-\xi_1)f_2=\xi_1x_1^2+(1-\xi_1)x_2^2-(1-\xi_1)x_1-\xi_1x_2,$$
		which is quadratic and convex over $K$. An easy computation shows that
		$\Sol(K,f_{\xi})$ has the unique solution
		$$ \Sol(K,f_{\xi}) = \Big\{\Big(\frac{1-\xi_1}{2\xi_1},\frac{\xi_1}{2(1-\xi_1)}\Big)\Big\}, \, \xi_1\in(0,1).$$
		One sees that the weak Pareto solution set is connected, moreover, the set is unbounded and $\Theta_1$ has non-closed domain.
	\end{example}

	\section*{Perspectives and future work}

	The results related to set-valued mappings having connected values in this paper can be
	useful for investigating (un)bounedness and (dis)connectedness of
	solution sets in set
	optimization, vector equilibrium, or vector variational inequality under some
	generalized convexity/monotone assumptions. We plan to exploit special
	structures in geometry
	such as polyhedral or semi-algebraic 
	to discover new properties concerning contractibility or connectedness of solution sets
	of these problems as in
	\cite{huong2017connectedness,huong2016polynomial,hieu2019polynomialVVIS,hieu2020application}.


\begin{acknowledgements}
The authors are indebted to the anonymous referee for the insightful comments and valuable suggestions. The ERCIM Alain Bensoussan Fellowship Programme supported the research of Vu Trung Hieu.
\end{acknowledgements}



	\bibliographystyle{spmpsci_unsrt}
	\bibliography{references.bib}

\end{document}